%
%
%
%
%
%
%
%
%
%
%

\documentclass[11pt]{amsart}
\usepackage{mathrsfs}
%

\parskip=\smallskipamount

\newtheorem{theorem}{Theorem}[section]

\newtheorem{lemma}[theorem]{Lemma}
\newtheorem{corollary}[theorem]{Corollary}
\newtheorem{proposition}[theorem]{Proposition}

\theoremstyle{definition}
\newtheorem{definition}[theorem]{Definition}

\newcommand{\cA}{\mathcal{A}}

\newcommand{\cR}{\mathcal{R}}
\newcommand{\cC}{\mathcal{C}}

\newcommand{\cF}{\mathcal{F}}

\newcommand{\B}{\mathbb{B}}
\newcommand{\C}{\mathbb{C}}
\newcommand{\CC}{\mathbb{C\/}^2}
\newcommand{\D}{\mathbb{D}}

\newcommand{\bL}{\mathbb{L}}
\newcommand{\N}{\mathbb{N}}

\newcommand{\R}{\mathbb{R}}
\newcommand{\RN}{\mathbb{R\/}^N}

\newcommand{\debar}{\bar{\partial}}

\newcommand\Subset{\subset\subset}

\newcommand{\row}[2]{#1_1,\ldots,#1_#2}
\newcommand{\oO}{\overline\Omega}
\newcommand{\jN}{j=1,\ldots,N}

\newcommand{\jn}{j=1,\ldots,n}

\def\tphi{\tilde \Phi}

\numberwithin{equation}{section}

%
%
%
%

\begin{document}
\title[Analytic Structure]{Presence or absence of analytic structure in maximal ideal spaces}
\author[A.\ J.\ Izzo]{Alexander J. Izzo}
\thanks{The first author was partially supported by NFR grant 209751/F20.}
\address{Alexander J. Izzo, Department of Mathematics and Statistics, Bowling Green State University, Bowling Green, OH, 43403 USA}
\email{aizzo@bgsu.edu}
\author[H.\ Samuelsson Kalm]{H\aa kan Samuelsson Kalm}
\thanks{The second author was partially supported by the Swedish Research Council.}
\address{H\aa kan Samuelsson Kalm, Mathematical Sciences, Chalmers University of Technology and the University of Gothenburg,
SE-412 96 G\"{o}teborg, Sweden}
\email{hasam@chalmers.se}
\author[E.\ Forn\ae ss Wold]{Erlend Forn\ae ss Wold}
\thanks{The third author was supported by NFR grant 209751/F20.}
\address{Erlend Forn\ae ss Wold,  Matematisk Institutt, Universitetet i Oslo,
Postboks 1053 Blindern, 0316 Oslo, Norway} 
\email{erlendfw@math.uio.no}

%
%
\subjclass[2000]{Primary 32E20; Secondary 32A38, 32A65, 32E30, 32V10, 46J10, 46J15}
\keywords{}

\begin{abstract}
We study extensions of Wermer's maximality theorem to several complex variables.
We exhibit various smoothly embedded manifolds in complex Euclidean space whose hulls are non-trivial but contain no analytic disks.  We answer a question posed by Lee Stout concerning the existence of analytic structure for a uniform algebra whose maximal ideal space is a manifold.
\end{abstract}

\maketitle

\section{introduction}

A central theme in the theory uniform algebras is to find analytic structure in the maximal ideal space of a given algebra.  
For $M$ a compact space and $f_1,\ldots, f_N$ continuous complex-valued 
functions on $M$, we will denote by $[f_1,\ldots, f_N]_M$ the uniform algebra on $M$ generated by $f_1,\ldots, f_N$.  Setting $f=(f_1,\ldots,f_N)\colon M\to \C^N$, the uniform algebra $[f_1,\ldots, f_N]_M$ is isomorphic to $[z_1,\ldots,z_N]_{f(M)}$, \emph{i.e.}, to the uniform algebra on $f(M)\subset \C^N$
generated by the complex coordinate functions. Hence, the maximal ideal space of $[f_1,\ldots, f_N]_M$ is isomorphic to the polynomially convex hull
$\widehat{f(M)}$ of $f(M)$. In particular, notice that analytic structure in $\widehat{f(M)}$ will prevent $[f_1,\ldots, f_N]_M$ from being 
the algebra $\cC(M)$ of all continuous functions on $M$, and that the maximal ideal space of $[f_1,\ldots, f_N]_M$ is $M$ if and only if $f(M)$ is 
polynomially convex. 
In this paper we will mainly be concerned with the case when $M$ is the boundary of some domain in $\C^n$
with polynomially convex closure.
For notational convenience, we will denote $[z_1,\ldots,z_n, f_1,\ldots, f_N]$ by $[z,f]$, and we will denote the graph of $f$ over $M$ (\emph{i.e.}, 
the image of $M$ under the map $(z,f)$) by $\mathcal G_f(M)$.
Throughout the paper the word \lq\lq smooth\rq\rq\ will mean of class $\cC^\infty$ except where explicitly indicated otherwise.

\smallskip

Recall Wermer's maximality theorem \cite{Wermer53}: 
If $f$ is a continuous function on the unit circle $b\mathbb D\subset \C$, then either $f$ is the boundary 
value of a holomorphic function or else the uniform algebra $[z,f]_{b\mathbb D}$ generated by $z$ and $f$ is 
equal to $\cC(b\mathbb D)$.   Since $[z,\frac{1}{z}]_{b\mathbb D}=\cC(b\mathbb D)$ it is clear from 
the Oka-Weil theorem that $[z,f]_{b\mathbb D}=\cC(b\mathbb D)$ if and only if the graph $\mathcal G_f(b\mathbb D)$ of $f$
is polynomially convex.  Thus for a continuous function $f$ on $b\mathbb D$ the following four conditions are equivalent:
\begin{itemize}
\item[(1)] $\mathcal G_f(b\D)$ is polynomially convex.
\item[(2)] $\widehat{\mathcal G_f(b\D)}\setminus \mathcal G_f(b\D)$ contains no analytic disk.
\item[(3)] $f$ does not extend continuously to a holomorphic function on the unit disk $\D$.  
\item[(4)] $[z,f]_{b\D}=\cC(b\D)$.
\end{itemize}
We sketch here a proof of $(3) \implies (1)$ that illustrates our approach in this paper.
Let $\tilde f$ be the harmonic extension of $f$ to $\mathbb D$.   
Using harmonic conjugates and the fact that $\mathbb{D}$ is starshaped, 
it is not hard to see that $\mathcal G_{\tilde f}(\overline{\mathbb D})$ is polynomially convex.  
Assume that $\tilde{f}$ is not holomorphic; then the set $A\subset \mathbb{D}$ of points where $\overline\partial\tilde f = 0$
is discrete. One can show that every point in $\mathbb{D}\setminus A$ is a local peak point for the algebra
$[z,\tilde f]_{\overline{\mathbb D}}$. It then follows from Rossi's local maximum principle (see, \emph{e.g.}, \cite{Rosay2006} or \cite[Theorem~III.8.2]{Gamelin})
that $(z,f(z))\notin \widehat{\mathcal{G}_f(b\mathbb{D})}$ for every $z\in \mathbb{D}\setminus A$.
Since $A$ is discrete it follows that $\mathcal{G}_f(b\mathbb{D})$ is polynomially convex.

One could ask about the possibility of carrying over the equivalence of (1), (2), (3), and (4) above to the setting of several complex variables.  Specifically, one could ask whether for $\Omega\subset \C^n$ a sufficiently nice domain and $f_1,\ldots, f_n$ continuous functions on $\cC(b\Omega)$ the following four conditions are equivalent:
\begin{itemize}
\item[(1)] $\mathcal G_f(b\Omega)$ is polynomially convex.
\item[(2)]  $\widehat{\mathcal G_f(b\Omega)}\setminus \mathcal G_f(b\Omega)$ contains no analytic disk.
\item[(3)] There does not exist an analytic set\footnote{Throughout the paper, by an \emph{analytic set} we mean a subset of $\C^n$ that is locally the common zero set of finitely many holomorphic functions.  Such sets are often referred to as analytic varieties or holomorphic varieties.} ``attached'' to $b\Omega$ to which $f$ extends continuously as a holomorphic map.
\item[(4)] $[z,f]_{b\Omega}=\cC(b\Omega)$.
\end{itemize}
Of course it is always true that $(4) \implies (1) \implies (2)\implies (3)$.  That these implications are not reversible is shown in the work of Richard Basener \cite{Basener}.
Specifically, letting $\B_n$ denote the unit ball in $\C^n$, Basener showed that there exist smooth functions $f_1,\ldots, f_4$ on 
$S^3=b\B_2\subset\C^2$ such that $\mathcal G_f(b\B_2)$ is polynomially convex but $[z,f]_{b\B_2}\neq \cC(b\B_2)$, and he also observed that there exist different smooth functions 
$f_1,\ldots, f_3$ such that 
the polynomially convex hull of $\mathcal G_f(b\B_2)\subset\mathbb C^5$ 
is non-trivial but contains no analytic disk.  Theorems~\ref{graphtorus},~\ref{graphball}, and~\ref{embedM} exhibit further instances of this phenomenon of smooth manifolds in $\C^n$ with non-trivial polynomially convex hull without analytic structure, and in particular, we strengthen the second result of Basener by reducing the number of functions needed.  

In view of the results of Basener, we need some conditions on the $f_j$'s if we are to obtain multivariable versions of Wermer's maximality theorem. 
A difference 
between one and several complex variables is that in $\mathbb C^n$ for $n\geq 2$, the Dirichlet problem, while solvable for harmonic functions, is not in general solvable with {\em pluriharmonic} functions, and it is only the pluriharmonic functions that have conjugates.  In seeking extensions of Wermer's maximality theorem to several complex variables, it is therefore natural to restrict consideration to those functions that are boundary values of pluriharmonic functions.  
Uniform algebras generated by holomorphic and pluriharmonic functions of several complex variables were studied by E.~M.~\v Cirka in \cite{Cirka}, the first author in \cite{Izzo93} and \cite{Izzo95}, and the second and third authors in \cite{SW}.  
We will denote the set of all complex-valued pluriharmonic functions on a domain $\Omega$ by $PH(\Omega)$.
Our first result is closely related to a result in \cite{SW}.

\begin{theorem}\label{main1}
Let   $\Omega\subset \C^n$ be a bounded domain with $\mathcal C^1$-smooth boundary and polynomially convex closure, and let $h_j\in PH(\Omega)\cap\cC(\overline\Omega)$ for $\jN$.  Then the following four conditions are equivalent:
\begin{itemize}
\item[(1)] $\mathcal G_h(b\Omega)$ is polynomially convex.
\item[(2)]  $\widehat{\mathcal G_h(b\Omega)}\setminus \mathcal G_h(b\Omega)$ contains no analytic disk.
\item[(3)] There does not exist a nontrivial analytic disk $\triangle\hookrightarrow\Omega$ on which all the $h_j$'s are holomorphic.  
\item[(4)] $[z,h]_{\oO}=\{\, f\in \mathcal{C}(\oO):f|_{b\Omega}\in [z,h]_{b\Omega}\,\}$.
\end{itemize}
\end{theorem}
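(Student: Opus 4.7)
The implications $(4) \Rightarrow (1) \Rightarrow (2)$ are formal, and $(1) \Rightarrow (4)$ is a routine approximation argument once polynomial convexity of $\mathcal G_h(b\Omega)$ is in hand; the remaining nontrivial implications are $(2) \Rightarrow (3)$ and $(3) \Rightarrow (1)$. Both hinge on the auxiliary fact
\begin{equation*}
\widehat{\mathcal G_h(b\Omega)} \subseteq \mathcal G_h(\oO),
\end{equation*}
which I plan to establish first. Given this inclusion, $(2) \Rightarrow (3)$ is immediate: any analytic disk in $\widehat{\mathcal G_h(b\Omega)} \setminus \mathcal G_h(b\Omega)$ projects under $(z,w) \mapsto z$ to a non-constant holomorphic disk $\triangle \hra \Omega$, and the disk itself is then automatically the graph of $h|_\triangle$, forcing each $h_j|_\triangle$ to be holomorphic. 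So the remaining tasks are the auxiliary fact and the implication $(3) \Rightarrow (1)$.

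For the auxiliary fact, let $(z_0, w_0)$ lie in $\widehat{\mathcal G_h(b\Omega)}$. Projecting on the first factor and combining polynomial convexity of $\oO$ with the maximum principle gives $\widehat{b\Omega} = \oO$, so $z_0 \in \oO$. Choose a Jensen measure $\mu$ on $\mathcal G_h(b\Omega)$ representing $(z_0, w_0)$ for the polynomial algebra, and push it forward along the $z$-projection to a Jensen measure $\nu$ for $z_0$ with respect to $P(\oO)$. The pluriharmonic hypothesis now enters decisively: for any real pluriharmonic $u$ on $\Omega$ continuous on $\oO$, both $u$ and $-u$ are continuous plurisubharmonic on $\Omega$, and the continuous version of Jensen's inequality applied to both yields $u(z_0) = \int u\, d\nu$. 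Applied to the real and imaginary parts of each $h_j$, this gives $(w_0)_j = \int w_j\, d\mu = \int h_j\, d\nu = h_j(z_0)$, so $w_0 = h(z_0)$.

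To prove $(3) \Rightarrow (1)$, I argue by contradiction: if $(z_0, h(z_0)) \in \widehat{\mathcal G_h(b\Omega)}$ for some $z_0 \in \Omega$, I aim to exhibit this point as a local peak point for $[z, h]_{\oO}$ and invoke Rossi's local maximum principle. On a small ball $U \subset \Omega$ centered at $z_0$, pluriharmonicity yields a splitting $h_j = F_j + \overline{G_j}$ with $F_j, G_j$ holomorphic on $U$. Condition (3) forbids any non-constant holomorphic disk $\phi \colon \D \to U$ with $\phi(0) = z_0$ on which all $h_j \circ \phi$ are holomorphic; under the splitting this is equivalent to saying that the common fiber $\{z \in U : G(z) = G(z_0)\}$ of the holomorphic map $G = (G_1, \ldots, G_N)$ contains no non-trivial analytic disk through $z_0$. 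Exploiting this discreteness, one constructs polynomials $p_k(z, w)$ with $p_k(z_0, h(z_0)) \to 1$ and $|p_k| \to 0$ uniformly on $\mathcal G_h(\overline U \setminus V)$ for every neighborhood $V$ of $z_0$, establishing the local peak property and contradicting the presence of $(z_0, h(z_0))$ in the hull.

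The hard part is the peak polynomial construction in the last step. The one-variable argument sketched in the excerpt relies on a single global harmonic conjugate on $\D$, a device with no direct multivariable analogue, so the local splittings $h_j = F_j + \overline{G_j}$ must instead be combined with the geometric content of (3) to produce peak functions expressible as polynomials in $z$ and $h$. I expect this step to follow closely the techniques developed in the earlier paper \cite{SW} of the second and third authors.
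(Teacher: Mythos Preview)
Your logical scheme has two genuine gaps. First, the argument you label $(2)\Rightarrow(3)$ in fact proves the converse: you show that an analytic disk in $\widehat{\mathcal G_h(b\Omega)}\setminus\mathcal G_h(b\Omega)$ projects (via the auxiliary inclusion) to a disk in $\Omega$ on which each $h_j$ is holomorphic, which is $\neg(2)\Rightarrow\neg(3)$, i.e.\ $(3)\Rightarrow(2)$. The implication you actually need is that if some disk $\triangle\hra\Omega$ has every $h_j$ holomorphic, then $\mathcal G_h(\triangle)\subset\widehat{\mathcal G_h(b\Omega)}$. This is not automatic---the boundary of the analytic disk $\mathcal G_h(\triangle)$ need not touch $\mathcal G_h(b\Omega)$---and in the paper it is obtained only through the singular-foliation machinery of Proposition~\ref{main} and Theorem~\ref{foliation}. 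Second, the claim that $(1)\Rightarrow(4)$ is ``routine approximation'' is unjustified: polynomial convexity of $\mathcal G_h(b\Omega)$ alone does not explain why every $f\in\cC(\oO)$ with $f|_{b\Omega}\in[z,h]_{b\Omega}$ lies in $[z,h]_{\oO}$. The paper reaches $(4)$ only via $(1)\Rightarrow(3)\Rightarrow(4)$, the last step invoking the full proof of \cite[Theorem~1.3]{SW}. As written, then, your cycle never returns from condition~$(2)$, and condition~$(4)$ is not connected to the others.

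That said, your approach to $(3)\Rightarrow(1)$ via local peak points and Rossi's principle is correct and genuinely different from the paper's route, which establishes the contrapositive $(1)\Rightarrow(3)$ through a decreasing chain of analytic subsets cut out by the vanishing of wedge products $\overline\partial h_{i_1}\wedge\cdots\wedge\overline\partial h_{i_d}$ (Proposition~\ref{main}, Lemma~\ref{analyticset}). In fact the peak function you seek is explicit: with $h_j=F_j+\overline{G_j}$ on a ball $U$ and $z_0$ isolated in $G^{-1}(G(z_0))$, the function $1-c\sum_j|G_j-G_j(z_0)|^2$ peaks at $z_0$ and lies in $[z,h]_{\overline U}$ since each $G_j$ is approximable by polynomials in $z$ while $\overline{G_j}=h_j-F_j$ is, after approximating $F_j$, a polynomial in $(z,h)$. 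Your Jensen-measure argument for $\widehat{\mathcal G_h(b\Omega)}\subseteq\mathcal G_h(\oO)$ is also valid and provides a clean alternative to the paper's citation of \cite[Lemma~4.6]{SW}. So your mechanism handles the equivalence $(1)\Leftrightarrow(3)$ cleanly; what is missing is an independent link from $(2)$ and to $(4)$.
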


As an immediate corollary we have the following.

\begin{corollary}\label{cor1}
Let $\Omega$ and $\row hN$  be as in Theorem~\ref{main1}.  Then
\begin{equation}\label{super}
[z,h]_{b\Omega}=\cC(b\Omega) \quad \Longrightarrow \quad [z,h]_{\overline\Omega}=\cC(\overline\Omega).
\end{equation}
\end{corollary}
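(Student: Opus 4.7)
The plan is direct: derive condition~(1) of Theorem~\ref{main1} from the hypothesis $[z,h]_{b\Omega}=\cC(b\Omega)$, and then read off the conclusion from condition~(4) of that theorem.

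First I will observe that if $[z,h]_{b\Omega}=\cC(b\Omega)$, then in particular the generators separate points of $b\Omega$, so the map $(z,h)\colon b\Omega\to\C^{n+N}$ is injective and $\mathcal G_h(b\Omega)$ is homeomorphic to $b\Omega$. Now the maximal ideal space of $[z,h]_{b\Omega}$ is, on one hand, the polynomially convex hull $\widehat{\mathcal G_h(b\Omega)}$, and on the other hand, because $[z,h]_{b\Omega}=\cC(b\Omega)$, it must coincide with $b\Omega$, identified via $(z,h)$ with $\mathcal G_h(b\Omega)$. Hence $\widehat{\mathcal G_h(b\Omega)}=\mathcal G_h(b\Omega)$, which is precisely condition~(1) of Theorem~\ref{main1}.

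Applying Theorem~\ref{main1}, condition~(4) follows, and thus
\[
[z,h]_{\oO} = \{\,f\in \cC(\oO) : f|_{b\Omega}\in [z,h]_{b\Omega}\,\}.
\]
Substituting the hypothesis $[z,h]_{b\Omega}=\cC(b\Omega)$ on the right renders the restriction condition vacuous, so the right-hand side is all of $\cC(\oO)$, yielding the desired equality \eqref{super}. No serious obstacle arises: the corollary is simply an unpacking of the equivalence (1)$\Leftrightarrow$(4) supplied by Theorem~\ref{main1}, together with the elementary fact that a uniform algebra on a compact space that equals $\cC$ of that space has the space itself as its maximal ideal space.
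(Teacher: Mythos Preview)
Your proof is correct and matches the paper's intent: the paper gives no explicit argument, simply labeling the corollary ``immediate,'' and your derivation via condition~(1) of Theorem~\ref{main1} followed by reading off condition~(4) is exactly the natural unpacking. One harmless redundancy: the remark that the generators separate points on $b\Omega$ is not needed, since the coordinate functions $z_1,\ldots,z_n$ already do so.
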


This result sharpens Theorem~1.3 in \cite{SW}, where it is proved that if $[z,h]_{b\Omega}=\cC(b\Omega)$, then
either there exists an analytic disk in $\Omega$ on which each $h_j$ is holomorphic, 
or else $[z,h]_{\overline\Omega}=\cC(\overline\Omega)$.
A related version of ``the maximality theorem'' for the distinguished boundary $\Gamma$ of the bi-disk $\mathbb D^2$
is also proved in \cite{SW}: 
Let $h_j\in PH(\mathbb D^2)\cap\cC(\overline{\mathbb D}^2)$.   Then either there exists an algebraic 
variety $Z\subset\mathbb C^2$ with $Z\cap b\mathbb D^2\subset\Gamma$ on which all $h_j$ are holomorphic, 
or $[z,h]_{\Gamma}=\cC(\Gamma)$.
In particular, starting with continuous functions $f_j$ on $\Gamma$, the only obstruction to $[z,f]_{\Gamma}=\cC(\Gamma)$ is 
the presence of analytic structure in the maximal ideal space of the algebra $[z,f]_{\overline{\mathbb{D}}^2}$ 
\emph{provided} we assume that the $f_j$'s extend to pluriharmonic functions on the bi-disk.  Theorem~\ref{graphtorus} below shows that this becomes false if the plurihamonicity condition is dropped.

Theorem~\ref{main1} implies another extension of Wermer's maximality theorem to several complex variables.  A result of Lee Stout \cite{Stout1.5} asserts that  the complex polynomials are uniformly dense in 
the continuous functions on any compact, polynomially convex, real-analytic subvariety of
complex Euclidean space.
In combination with Theorem~\ref{main1} this gives that in the case when the $h_j$'s are \emph{real-analytic} on $b\Omega$ we can replace condition (4) in Theorem~\ref{main1} by the condition that $[z,h]_{\overline\Omega}=\cC(\overline\Omega)$.  Explicitly we obtain the following result.\footnote{When every point of $b\Omega$ is known to be a peak point 
for $A(\Omega)$ [e.g., for strictly pseudoconvex domains] 
one can invoke a weaker result of Anderson, Izzo, and Wermer \cite{AIW} in place of Stout's theorem.}

\begin{corollary}\label{cor2}
Let $\Omega$ and $\row hN$  be as in Theorem~\ref{main1}.  Suppose in addition that $b\Omega$ is real-analytic and the $\row hN$ are 
real-analytic on $b\Omega$.  Then the following four conditions are equivalent:
\begin{itemize}
\item[(1)] $\mathcal G_h(b\Omega)$ is polynomially convex.
\item[(2)]  $\widehat{\mathcal G_h(b\Omega)}\setminus \mathcal G_h(b\Omega)$ contains no analytic disk.
\item[(3)] There does not exist a nontrivial analytic disk $\triangle\hookrightarrow\Omega$ on which all the $h_j$'s are holomorphic.  
\item[(4)] $[z,h]_{\overline\Omega}=\cC(\overline\Omega)$.  
\end{itemize}
\end{corollary}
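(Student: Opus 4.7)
The plan is to combine Theorem~\ref{main1} with Stout's theorem on polynomial approximation for real-analytic subvarieties to add the new condition~(4), $[z,h]_{\oO}=\cC(\oO)$, to the existing equivalence chain. Since Theorem~\ref{main1} already delivers the equivalences $(1)\Leftrightarrow(2)\Leftrightarrow(3)$, all that remains is to show that the new condition~(4) is equivalent to these, which I will do by proving $(1)\Rightarrow$~new~(4) and new~(4)~$\Rightarrow(3)$.

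The implication new~(4)~$\Rightarrow(3)$ is immediate: if there existed a nontrivial analytic disk $\triangle\hookrightarrow\Omega$ on which every $h_j$ is holomorphic, then every element of $[z,h]_{\oO}$ would pull back to a holomorphic function on $\triangle$; but new~(4) would then force every element of $\cC(\oO)$ to be holomorphic on $\triangle$, a contradiction.

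For the main direction $(1)\Rightarrow$~new~(4), I would argue as follows. Under the hypothesis that $b\Omega$ is real-analytic and the $h_j$'s are real-analytic on $b\Omega$, each $h_j$ extends to a real-analytic function in a neighborhood of $b\Omega$ in $\C^n$, so the graph $\mathcal G_h(b\Omega)\subset\C^{n+N}$ is a compact real-analytic subvariety. Assuming~(1), this subvariety is polynomially convex, so Stout's theorem \cite{Stout1.5} yields $P(\mathcal G_h(b\Omega))=\cC(\mathcal G_h(b\Omega))$. Transporting this equality back along the homeomorphism $(z,h)\colon b\Omega\to\mathcal G_h(b\Omega)$ gives $[z,h]_{b\Omega}=\cC(b\Omega)$. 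Now apply condition~(4) of Theorem~\ref{main1}, which holds by hypothesis (1):
\[
[z,h]_{\oO}=\{\,f\in\cC(\oO):f|_{b\Omega}\in[z,h]_{b\Omega}\,\}=\{\,f\in\cC(\oO):f|_{b\Omega}\in\cC(b\Omega)\,\}=\cC(\oO),
\]
which is new~(4).

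The argument has no serious obstacle; the only point that deserves checking is that $\mathcal G_h(b\Omega)$ really is a compact real-analytic subvariety of $\C^{n+N}$ in the sense required by Stout's theorem, and this follows directly from the real-analyticity of $b\Omega$ together with a real-analytic extension of the $h_j$'s to a neighborhood. As mentioned in the footnote of the statement, when $b\Omega$ is known to consist entirely of peak points of $A(\Omega)$ (e.g., in the strictly pseudoconvex case), the appeal to Stout's theorem may be replaced by the weaker result of Anderson--Izzo--Wermer \cite{AIW}, but the structure of the proof is unchanged.
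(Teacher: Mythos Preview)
Your proposal is correct and follows essentially the same route as the paper: use Theorem~\ref{main1} for the equivalence of (1)--(3), then for $(1)\Rightarrow(4)$ observe that the real-analyticity hypotheses make $\mathcal G_h(b\Omega)$ a compact polynomially convex real-analytic subvariety so Stout's theorem gives $[z,h]_{b\Omega}=\cC(b\Omega)$, whence condition~(4) of Theorem~\ref{main1} (equivalently, Corollary~\ref{cor1}) yields $[z,h]_{\oO}=\cC(\oO)$. The reverse implication $(4)\Rightarrow(3)$ is handled just as you do, and the paper treats it as obvious.
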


Corollary~\ref{cor2} becomes false, in general, if the real-analyticity hypotheses are dropped.  On the (nonsmooth) bi-disk $\mathbb D^2$ with $h_1=\overline{z_1-1}$ and $h_2=\overline{z_2(z_1-1)}$, condition (3) is satisfied while condition (4) fails since $h_1$ and $h_2$ are both identically zero on the analytic disk $\{z_1=1, |z_2|\leq 1\}$ lying in $b\mathbb D^2$.  Replacing the bi-disk by a smoothly bounded domain $\Omega$ such that $\{z_1=1, |z_2|\leq 1\}\subset b\Omega$ and $\{z_1=1\}\cap \Omega$ is empty gives a counterexample to Corollary~\ref{cor2} without the real-analyticity hypotheses.   
We conjecture, however, that for \emph{strictly pseudoconvex} domains, Corollary~\ref{cor2} remains true without the real-analyticity hypotheses.  Note that this conjecture constitues an $n$-dimensional generalization of Wermer's maximality theorem.

When $\mathcal G_h(b\Omega)$ fails to be polynomially convex we can say more than just that there is an analytic disk in $\widehat{\mathcal G_h(b\Omega)}\setminus \mathcal G_h(b\Omega)$; we show that there is an analytic set $Z\subset \Omega$ that can be foliated in such a way the $h_j$'s are holomorphic along the plaques, and the graph of $h$ over $Z$ is contained in $\widehat{\mathcal G_h(b\Omega)}\setminus \mathcal G_h(b\Omega)$.  However, this \lq\lq foliation\rq\rq\ can have singularities, so it is not a regular foliation in the usual sense.  We therefore make the following definition.

\begin{definition}\label{singularfol}
A {\em singular foliation $\{ (U_\alpha, F_\alpha)\}$ of an analytic set $Z$ by non-trivial varieties\/} is a cover of $Z$ by open sets $U_\alpha$ together with holomorphic maps $F_\alpha$ on $U_\alpha$ such that each level set of each $F_\alpha$ has dimension $\geq 1$ and in each nonempty intersection $U_\alpha\cap U_\beta$ the collection of components of the level sets of $F_\alpha$ coincides with the collection of components of the level sets of $F_\beta$.  For each $\alpha$, the level sets of $F_\alpha$ are called {\em plaques}.
\end{definition}

Given a singular foliation of an analytic set $Z$, it is possible to piece together the plaques to obtain a partition of $Z$ into disjoint \lq\lq leaves\rq\rq\ that are \lq\lq immersed analytic sets\rq\rq, in a manner similar to how, given a regular foliation, one obtains leaves that are immersed submanifolds.  Since we will not need the leaves of a singular foliation, we omit the proof.

\begin{theorem}\label{foliation}
Let $\Omega$ and $\row hN$  be as in Theorem~\ref{main1}. Suppose that $\mathcal G_h(b\Omega)$ is not polynomially convex.  Then there is an analytic set $Z\subset \Omega$
of dimension $d\geq 1$ and a singular foliation $\mathcal{F}$ of $Z$ by non-trivial varieties such that
all the $h_j$ are strongly holomorphic along the plaques of $\mathcal{F}$.  The graph  
$\mathcal G_h(Z)$ is contained in $\widehat{\mathcal G_h(b\Omega)}\setminus \mathcal G_h(b\Omega)$. 
\end{theorem}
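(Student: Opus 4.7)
The plan is to build $Z$ and its foliation from the local antiholomorphic parts of the $h_j$'s, and to derive the containment in the hull by a maximum modulus argument applied to pure-dimensional leaves. Since $\mathcal G_h(b\Omega)$ fails to be polynomially convex, Theorem~\ref{main1} (via the equivalence $(1)\Leftrightarrow(3)$) supplies a nontrivial analytic disk $\triangle\hookrightarrow\Omega$ on which every $h_j$ is holomorphic; this disk will guarantee that the set $Z$ constructed below is nonempty and of positive dimension.

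Cover $\Omega$ by simply connected open sets $U_\alpha$ and write $h_j=\varphi_j^\alpha+\overline{\psi_j^\alpha}$ on $U_\alpha$ with $\varphi_j^\alpha,\psi_j^\alpha\in\cO(U_\alpha)$. On each overlap $U_\alpha\cap U_\beta$ the identity $\varphi_j^\beta-\varphi_j^\alpha=\overline{\psi_j^\alpha-\psi_j^\beta}$ forces $\psi_j^\alpha-\psi_j^\beta$ to be simultaneously holomorphic and antiholomorphic, hence locally constant, so the level sets of $\psi^\alpha:=(\psi_1^\alpha,\ldots,\psi_N^\alpha)$ agree on overlaps up to relabeling by constants. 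Define $Z\subset\Omega$ to be the locus where the Jacobian of $\psi^\alpha$ has rank at most $n-1$; by the overlap compatibility, $Z$ is globally well defined and analytic, and it coincides with the set of points whose local fiber of $\psi^\alpha$ has positive dimension.

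Because the $h_j$'s are holomorphic on $\triangle$, each $\overline{\psi_j^\alpha}|_\triangle$ is both holomorphic and antiholomorphic, hence constant, so $\triangle$ lies in a single fiber of $\psi^\alpha$ and therefore $\triangle\subset Z$ and $\dim Z\geq 1$. Setting $F_\alpha:=\psi^\alpha|_{Z\cap U_\alpha}$ then gives a singular foliation $\mathcal F$ in the sense of Definition~\ref{singularfol}: the plaques (components of level sets of $F_\alpha$) have dimension $\geq 1$ by the rank condition defining $Z$, and the overlap compatibility of the $\psi^\alpha$'s transfers to the plaques. Along any plaque $V$ the function $\psi^\alpha$ is constant, so $h_j|_V=\varphi_j^\alpha|_V+\overline{c_j}$ is strongly holomorphic.

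For the containment $\mathcal G_h(Z)\subset\widehat{\mathcal G_h(b\Omega)}\setminus\mathcal G_h(b\Omega)$, fix $z_0\in Z$ and let $L$ be the positive-dimensional irreducible component through $z_0$ of the globally defined level set of $\psi^\alpha$ containing $z_0$. Then $L$ is analytic in $\Omega$ with $\overline L\setminus L\subset b\Omega$, and each $h_j$ restricts strongly holomorphically to $L$. Consequently, for any polynomial $p$ in $(z,w)\in\C^n\times\C^N$, the function $z\mapsto p(z,h(z))$ is strongly holomorphic on $L$ and continuous on $\overline L$, and the maximum modulus principle for analytic sets yields $|p(z_0,h(z_0))|\leq\max_{b\Omega}|p(z,h(z))|=\max_{\mathcal G_h(b\Omega)}|p|$. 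Hence $(z_0,h(z_0))\in\widehat{\mathcal G_h(b\Omega)}$, and disjointness from $\mathcal G_h(b\Omega)$ is automatic since $Z\subset\Omega$. The main technical obstacle will be verifying that the local data $\psi^\alpha$ assemble into a bona fide singular foliation as prescribed by Definition~\ref{singularfol} and, in particular, producing for each $z_0\in Z$ a pure-dimensional closed-in-$\Omega$ leaf $L$ through $z_0$ whose boundary lies in $b\Omega$; once that is in place, the maximum principle delivers the remainder of the statement.
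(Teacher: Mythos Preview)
Your argument has a genuine gap in the construction of $Z$. You define $Z$ as the locus where the Jacobian of $\psi^\alpha$ has rank at most $n-1$ and assert that this ``coincides with the set of points whose local fiber of $\psi^\alpha$ has positive dimension''; this is false. The critical set of a holomorphic map can be strictly larger than its positive-fiber-dimension locus, and the rank condition at a point says nothing about the dimension of the fiber through that point. Concretely, take $\Omega=\B_4$ and $h_j=\overline{\psi_j}$ with $\psi=(z_1z_4,\,z_4^2,\,z_2,\,z_3^2)$. The hypothesis of the theorem is satisfied, since on the disk $\{z_2=z_3=z_4=0\}\cap\B_4$ all $h_j$ vanish identically. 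Your $Z$ is $\{z_3 z_4^2=0\}=\{z_3=0\}\cup\{z_4=0\}$. At any point $p=(a,b,0,d)\in Z$ with $d\neq 0$ one computes $\psi^{-1}(\psi(p))=\{(a,b,0,d),(-a,b,0,-d)\}$, a finite set lying in $Z$. Hence the plaque of your proposed foliation through $p$ is zero-dimensional, so $\{(U_\alpha,F_\alpha)\}$ is \emph{not} a singular foliation by non-trivial varieties in the sense of Definition~\ref{singularfol}.

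The paper circumvents this by iterating rather than stopping at your $Z$. Starting from $Z_1$ (your $Z$), one passes to an irreducible component containing the disk and asks whether $i^*_{(Z_1)_{\rm reg}}(\overline\partial h_{i_1}\wedge\cdots\wedge\overline\partial h_{i_{d_1}})\equiv 0$ for all $(i_1,\ldots,i_{d_1})$, where $d_1=\dim Z_1$. If not, one descends to the proper analytic subset $Z_2\subset Z_1$ where these restricted forms vanish, and repeats. The chain stabilizes at an irreducible $Z_k$ of dimension $d_k\geq 1$ still containing the disk, on which the rank of $i^*_{Z_k}\overline\partial h$ is everywhere $<d_k$. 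It is this \emph{relative} rank deficiency (together with upper semicontinuity of fiber dimension) that forces every level set of $F_\alpha|_{Z_k}$ to have positive dimension; the absolute condition $\mathrm{rank}\,d\psi<n$ does not.

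Your hull-containment argument via a global closed-in-$\Omega$ leaf $L$ is also incomplete, as you yourself note: the $\psi^\alpha$ patch only up to constants, so while their level sets are well defined, a global leaf need not be a closed analytic subset of $\Omega$ when $\Omega$ is not simply connected, and no boundary-regularity for $L$ has been established. The paper sidesteps this by arguing locally with plaques. If some $(z_0,h(z_0))$ with $z_0\in Z$ were outside the hull, choose a separating polynomial $P$ and replace $z_0$ by a point where $|P(\cdot,h(\cdot))|$ is maximal over $\overline Z$; the compact set $K=\{z\in Z:P(z,h(z))=P(z_0,h(z_0))\}$ lies in $\Omega$ and, by the maximum principle applied on each plaque, is a union of plaques. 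Successively intersecting with the hyperplanes $\{z_j=p_j\}$ at which $|z_j|$ is maximal on $K$ produces a plaque reduced to a point, contradicting the positive-dimensionality. No global leaf is needed.
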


As mentioned above, without the assumption that our functions $f_j\colon b\Omega \to \C$ have pluriharmonic 
extensions to $\Omega$, the presence or absence of analytic structure in the maximal ideal space 
of $[z,f]_{b\Omega}$ is more delicate.
Our next three results show that it is not uncommon that the maximal ideal space of $[z,f]_{b\Omega}$
strictly contains $b\Omega$ but lacks analytic structure.  As mentioned above the first of these results shows that pluriharmonicity cannot be omitted in \cite[Theorem~1.3]{SW}, and the second improves a result of Basener by decreasing the dimension of the ambient space.  The third shows that every smooth manifold of dimension at least three smoothly embeds in some complex Euclidean space so as to have a non-trivial hull without analytic structure.

\begin{theorem}\label{graphtorus}
There exists a real-valued smooth function $f$ on $\Gamma=(b\D)^2\subset\C^2$ such that 
the polynomially convex hull of $\mathcal G_f(\Gamma)\subset\mathbb C^3$ 
is non-trivial but contains no analytic disk.  
\end{theorem}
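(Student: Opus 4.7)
The key observation is that $f$ being real-valued forces $\widehat{\mathcal{G}_f(\Gamma)}\subset\mathbb{C}^2\times\mathbb{R}$: indeed, the projection to the third coordinate $w$ lies in the polynomial hull in $\mathbb{C}$ of $f(\Gamma)\subset\mathbb{R}$, which equals $f(\Gamma)$ itself. Consequently, any non-constant holomorphic disk $\Phi=(\alpha,\beta,\gamma)\colon \mathbb{D}\to\widehat{\mathcal{G}_f(\Gamma)}$ has $\gamma$ holomorphic with real image, hence $\gamma\equiv c_0\in\mathbb{R}$ is constant. So every analytic disk in the hull has the form $\Phi(\zeta)=(\alpha(\zeta),\beta(\zeta),c_0)$ with $(\alpha,\beta)\colon\mathbb{D}\to\overline{\mathbb{D}}^2$ a non-constant holomorphic map.

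The plan is to construct $f$ as a small smooth real-valued perturbation of the constant $0$ function on $\Gamma$. Because $\widehat{\mathcal{G}_0(\Gamma)}=\overline{\mathbb{D}}^2\times\{0\}$ is non-trivial, a stability argument for hulls under small $C^1$ perturbations ensures that for $\|f\|$ sufficiently small the hull $\widehat{\mathcal{G}_f(\Gamma)}$ remains strictly larger than $\mathcal{G}_f(\Gamma)$. This gives the non-triviality. To rule out analytic disks, first consider the case when the boundary $\Phi(b\mathbb{D})$ lies in $\mathcal{G}_f(\Gamma)$: then $(\alpha,\beta)(b\mathbb{D})\subset\Gamma$, so $\alpha$ and $\beta$ are finite Blaschke products, and the relation $\gamma(\zeta)=f(\alpha(\zeta),\beta(\zeta))$ on $b\mathbb{D}$ combined with $\gamma\equiv c_0$ forces $f\circ(\alpha,\beta)$ to be constant on $b\mathbb{D}$. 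I would choose $f$ in the Baire-generic complement of the thin set of functions for which some non-constant pair $(B_1,B_2)$ of finite Blaschke products yields a constant composition $f\circ(B_1,B_2)$ on $b\mathbb{D}$; this is possible because the space of pairs of finite Blaschke products of each fixed bidegree is finite-dimensional, and one can maintain smallness while imposing this generic condition.

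The remaining case, in which $\Phi(b\mathbb{D})\not\subset\mathcal{G}_f(\Gamma)$, should be handled by a Rossi-type local maximum principle argument: since $\mathcal{G}_f(\Gamma)$ is a smooth totally real $2$-submanifold of $\mathbb{C}^3$, the Shilov boundary of $[z,f]_\Gamma$ viewed as a uniform algebra on the hull is contained in $\mathcal{G}_f(\Gamma)$. Hence any analytic disk in the hull must accumulate on $\mathcal{G}_f(\Gamma)$, and by a Zorn/maximality argument one can pass to a maximal analytic variety in the hull whose topological boundary lies in $\mathcal{G}_f(\Gamma)$, reducing to the previous case. The main obstacle is the simultaneous enforcement of (a) smallness of $f$ to preserve non-triviality of the hull, (b) genericity to kill Blaschke-pair slices, and (c) a rigorous application of the local maximum principle to reduce all analytic disks to the boundary case; the last step is the technical heart of the argument.
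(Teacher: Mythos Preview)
Your approach has a fundamental obstruction that cannot be repaired by filling in details. The slicing you correctly identify --- that $\widehat{\mathcal{G}_f(\Gamma)}=\bigcup_{r} \widehat{\{f=r\}}\times\{r\}$ --- shows that the hull is nontrivial if and only if some level set $\{f=r\}\subset\Gamma$ has nontrivial polynomial hull in $\mathbb{C}^2$, and that it contains an analytic disk if and only if some $\widehat{\{f=r\}}$ does. Now for a generic smooth $f$ on $\Gamma$ (certainly for a Morse function, which your perturbative/Baire setup would produce), every level set is a finite union of smooth circles, with at worst finitely many nodes at critical values --- in any case a rectifiable curve. But by the structure theorem for hulls of rectifiable curves in $\mathbb{C}^n$ (Wermer, Bishop, Stolzenberg, Alexander), the polynomial hull of such a curve is the curve together with a possibly empty one-dimensional analytic variety. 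Hence each $\widehat{\{f=r\}}$ either equals $\{f=r\}$ or contains an analytic disk. Consequently, for generic smooth $f$ the graph $\mathcal{G}_f(\Gamma)$ is either polynomially convex or its hull contains an analytic disk; you cannot achieve both nontriviality and absence of disks. Your Blaschke-pair genericity condition, if successfully imposed, would in fact force every level set to be polynomially convex and collapse the hull entirely --- incidentally refuting your unproved ``stability'' claim. The Rossi/Zorn reduction you invoke is also not valid as stated: analytic disks in a hull need not extend to varieties with boundary in the base set.

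The paper's proof proceeds along an entirely different line. It starts from Alexander's example of a compact set $E\subset\Gamma$ whose hull is nontrivial but contains no analytic disk; crucially, $E$ is \emph{not} a curve. The paper enlarges $E$ by adjoining finitely many circular arcs to obtain a set $X\subset\Gamma$ with $\widehat{X}\setminus X=\widehat{E}\setminus E$ and such that $\Gamma\setminus X$ splits into two open pieces $U,V$, each disjoint from some vertical circle $\{z_1=c\}$. One then takes $f$ smooth, vanishing exactly on $X$, positive on $U$, negative on $V$. The zero level set $\{f=0\}=X$ contributes $(\widehat{X}\setminus X)\times\{0\}$ to the hull, which is nonempty with no analytic disks; every other level set $\{f=r\}$, being contained in $U$ or in $V$, is shown directly to be polynomially convex via an elementary fibering argument. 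Thus the construction is built around a single deliberately ``fat'' level set carrying Alexander's exotic hull, not around genericity.
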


\begin{theorem}\label{graphball}
There exist real-valued smooth functions $f_1$ and $f_2$ on $S^3=b\B_2\subset\C^2$ such that 
the polynomially convex hull of $\mathcal G_f(S^3)\subset\mathbb C^4$ 
is non-trivial but contains no analytic disk.  
\end{theorem}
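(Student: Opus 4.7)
The plan is to leverage Theorem~\ref{graphtorus} by extending its construction across the foliation of $S^3$ by tori. For $r\in(0,1)$ let $T_r = \{|z_1|=r\}\cap S^3$. After suitable rescaling, Theorem~\ref{graphtorus} supplies a smooth real-valued $g_r$ on $T_r$ whose graph-hull $\widehat{\mathcal G_{g_r}(T_r)}\subset\C^3$ is non-trivial and contains no analytic disk. The first step is to assemble the family $\{g_r\}$ into a single smooth real function $f_1$ on $S^3$, smoothly damped to $0$ at the degenerate fibers $\{z_1=0\}$ and $\{z_2=0\}$, and to take $f_2(z)=|z_1|^2-1/2$ as the foliation parameter. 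Non-triviality of $\widehat{\mathcal G_f(S^3)}\subset\C^4$ is then immediate: since $\{w_2=c\}$ is polynomially convex, the slice $\widehat{\mathcal G_f(S^3)}\cap\{w_2=c\}$ contains $\widehat{\mathcal G_{g_{r(c)}}(T_{r(c)})}\times\{c\}$ with $r(c)=\sqrt{1/2+c}$, which strictly exceeds the base for every $c\in(-1/2,1/2)$.

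To rule out analytic disks, I would first show that the coordinate functions $w_1$ and $w_2$ are real-valued throughout $\widehat{\mathcal G_f(S^3)}$. Indeed, since $f_j$ is real on $\mathcal G_f(S^3)$, one has $|e^{\pm iw_j}|\equiv 1$ there; uniform polynomial approximation on the compact hull (Oka--Weil) propagates the bound $|e^{\pm iw_j}|\le 1$ to the hull, forcing $\mathrm{Im}(w_j)\equiv 0$. Any analytic disk $\varphi\colon\overline{\bD}\to\widehat{\mathcal G_f(S^3)}$ then has $w_j\circ\varphi$ holomorphic and real, hence constant by the open mapping theorem. Its image therefore lies in a slice $\widehat{\mathcal G_f(S^3)}\cap\{w_1=a,\,w_2=b\}$, which projects under $(z_1,z_2,w_1,w_2)\mapsto(z_1,z_2)$ to a non-constant analytic disk in $\C^2$.

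The heart of the argument, and the main obstacle, is to arrange the interpolation so that \emph{every} such slice is disk-free. For $b\in(-1/2,1/2)$ the projected slice lies in the hull of the fiber curve $\{f_1=a\}\cap T_{r(b)}\subset\C^2$; the plan is to construct $f_1$ via a smoothly-parametrized version of the Theorem~\ref{graphtorus} construction, together with a generic perturbation, so that a local-peak-point and Rossi local-maximum-principle argument analogous to the one sketched in the introduction for Wermer's theorem applies uniformly to all these fiber-hulls. The degenerate end-slices $b=\pm 1/2$ correspond to the circles $\{z_j=0\}\cap S^3$ on which $f_1\equiv 0$; their graphs in $\C^3$ are smooth real curves and carry no analytic disks. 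The chief technical difficulty is precisely this uniform-in-$(a,b,r)$ verification of the disk-free property for the fiber hulls; once it is in place, the smooth assembly of $f_1$ and $f_2$ is a matter of careful partition-of-unity interpolation.
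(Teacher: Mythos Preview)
Your approach is genuinely different from the paper's, and it contains a real error at the degenerate fibers. You assert that the end-slices $b=\pm 1/2$ ``correspond to the circles $\{z_j=0\}\cap S^3$ on which $f_1\equiv 0$; their graphs in $\C^3$ are smooth real curves and carry no analytic disks.'' The curves themselves carry no disks, but what matters is their \emph{hulls}. The graph of $f_1\equiv 0$ over the circle $\{z_1=0,\ |z_2|=1\}$ is the round circle $\{(0,e^{i\theta},0,-1/2)\}\subset\C^4$, whose polynomial hull is the closed disk $\{(0,z_2,0,-1/2):|z_2|\le 1\}$. By your own fibering via Proposition~\ref{proppen}(i) (with $F=(w_1,w_2)$, or already just $F=w_2$), this disk sits inside $\widehat{\mathcal G_f(S^3)}$, and it is an honest analytic disk. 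The same disaster occurs on any torus $T_r$ where the damping kills $f_1$ identically: the $\{w_2=r^2-1/2\}$ fiber is then $T_r\times\{(0,r^2-1/2)\}$ and its hull is a full bidisk. So damping $f_1$ to zero anywhere destroys the conclusion; you would have to keep $f_1|_{T_r}$ a \emph{nonzero} multiple of the Theorem~\ref{graphtorus} function for every $r\in(0,1)$ (achievable with a factor like $e^{-1/(|z_1|^2|z_2|^2)}$) and in addition arrange that $f_1$ restricted to each pole-circle is \emph{not} the boundary value of a holomorphic function. That last requirement interacts badly both with smoothness at the poles and with the fiber analysis on nearby tori, and you do not address it.

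A secondary point: once you know $w_2$ is real on the hull and hence constant along any analytic disk, the disk already lies in the slice $\{w_2=b\}$, which by Proposition~\ref{proppen}(i) equals $\widehat{\mathcal G_{g_{r(b)}}(T_{r(b)})}\times\{b\}$; for $b\in(-1/2,1/2)$ this is disk-free directly by Theorem~\ref{graphtorus}. Your further slicing by $w_1$ and the proposed ``local-peak-point / Rossi'' program for all level curves $\{f_1=a\}\cap T_{r(b)}$ is therefore unnecessary in the non-degenerate range---you would be re-proving Theorem~\ref{graphtorus} inside its own conclusion.

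For comparison, the paper does not use Theorem~\ref{graphtorus} at all. It starts from a Duval--Levenberg set $X\subset S^3$ (arranged disjoint from $\{z_2=0\}$) whose hull is non-trivial and disk-free, constructs via Lemma~\ref{nonconstant} a single smooth $g\ge 0$ on $S^3$ with $\{g=0\}=X$ that is nonconstant on every circle $\{z_1=c\}$ for $|c|<1$, and sets $f_1=g$, $f_2=x_1\, g$. A double application of Proposition~\ref{proppen}(i)---first over $(w_1,w_2)\in\R^2$, then over $(w_1,y_1)$---shows every $(r_1,r_2)\neq(0,0)$ fiber is polynomially convex, while the $(0,0)$ fiber is exactly $X$. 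There is no degenerate-circle issue because the unique ``bad'' fiber has been engineered to be $X$ itself.
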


\begin{theorem}\label{embedM}
If $M$ is a smooth compact manifold-with-boundary of real dimension $m\geq 3$, then there is a smooth embedding
$F\colon M\to \C^{2m+4}$ such that the polynomially convex hull of $F(M)$ is nontrivial but contains no analytic disk. 
\end{theorem}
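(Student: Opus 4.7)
The plan is to leverage the torus example of Theorem~\ref{graphtorus} and to extend it to arbitrary $M$ via a collapsing-type embedding. Since $m\geq 3$, I pick an open chart $U\subset\mathrm{int}(M)$ diffeomorphic to $\R^m$ and embed the standard torus $T=T^2\subset\R^3\subset\R^m$ inside $U$. Theorem~\ref{graphtorus} supplies a smooth $f\colon T\to\R$ such that $\widehat{\mathcal G_f(T)}\subset\C^3$ is nontrivial and contains no analytic disk.

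Writing $\C^{2m+4}=\C^3\times\C^{2m+1}$, I will construct $F=(G,H)\colon M\to\C^3\times\C^{2m+1}$ as follows. The map $G\colon M\to\C^3$ is a smooth extension of the graph map $(z_1,z_2,f)\colon T\to\C^3$ supported in a small neighborhood $V$ of $T$, so that $G|_T=(z_1,z_2,f)$ and $G\equiv 0$ outside $V$. The second factor $H\colon M\to\C^{2m+1}$ is a smooth map satisfying $H^{-1}(0)=T$ with $H|_{M\setminus T}$ an injective immersion; the image $H(M)$ is arranged to be a compact polynomially convex subset of $\C^{2m+1}$ whose smooth part $H(M\setminus T)$ is totally real, and $dH$ is taken injective on the normal bundle of $T$ so that $F$ is an immersion there. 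A generic choice of extensions yields a smooth embedding $F$ whose image is totally real in $\C^{2m+4}$.

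Because $F(T)=\mathcal G_f(T)\times\{0\}^{2m+1}$, each of the last $2m+1$ coordinate polynomials vanishes identically on $F(T)$ and hence on $\widehat{F(T)}$. This forces $\widehat{F(T)}=\widehat{\mathcal G_f(T)}\times\{0\}^{2m+1}$, which is nontrivial and carries no analytic disk. Any point $\xi=(\zeta,0,\ldots,0)$ with $\zeta\in\widehat{\mathcal G_f(T)}\setminus\mathcal G_f(T)$ fails to lie in $F(M)$: if $F(p)=\xi$ then $H(p)=0$ forces $p\in T$, hence $\zeta=G(p)\in\mathcal G_f(T)$, a contradiction. Consequently $\widehat{F(M)}\supsetneq F(M)$.

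The main obstacle is showing that no new analytic disk appears in $\widehat{F(M)}$, which I reduce to the claim $\widehat{F(M)}=F(M)\cup\widehat{F(T)}$; together with total realness of $F(M)$ and Theorem~\ref{graphtorus}, this gives the desired conclusion. A naive attempt to invoke ``$H(M)$ totally real polynomially convex $\Rightarrow F(M)$ polynomially convex'' is blocked here because $H$ collapses the entire torus $T$ to the point $0$, so the subalgebra $P(H(M))$ restricts to constants on $T$ and cannot approximate the Basener data delivered by $G$. To establish $\widehat{F(M)}=F(M)\cup\widehat{F(T)}$ I propose a Kallin-type separation using the polynomial $H_1$, which vanishes on $\widehat{F(T)}$ but is bounded below on $F(M\setminus V')$ for any $V'\Subset V$, combined with Rossi's local maximum modulus principle applied across the transition annulus $V\setminus T$.
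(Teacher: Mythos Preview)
Your overall architecture mirrors the paper's: embed a $2$-torus $T\subset M$ so that $F|_T$ is the graph map from Theorem~\ref{graphtorus}, and make the remaining coordinates vanish exactly on $T$ while separating all other points. Where your argument breaks down is the last paragraph, the computation of $\widehat{F(M)}$.

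The proposed ``Kallin-type separation using the polynomial $H_1$'' does not work as stated. You only assumed $H^{-1}(0)=T$, not $H_1^{-1}(0)=T$, so there is no reason $H_1$ should be bounded away from zero on $F(M\setminus V')$; and Kallin's lemma requires a single polynomial that maps the two pieces into sets meeting only at $0$ in $\C$, with appropriate convexity on each side, none of which you have verified. The appeal to Rossi's local maximum principle ``across the transition annulus'' is likewise too vague: Rossi's principle locates Shilov boundaries, but you have given no mechanism to rule out hull points sitting over the collapsed region in the $H$-direction. In short, the identity $\widehat{F(M)}=F(M)\cup\widehat{F(T)}$ is asserted, not proved.

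The paper avoids this difficulty by a simple device you overlooked: it takes the auxiliary map $H$ to be \emph{real}-valued, with image in $\R^{2m+1}\subset\C^{2m+1}$ (and then adds the real-valued $f$ as a $(2m+2)$nd real coordinate). Once the last $2m+2$ coordinates are real, Weierstrass' approximation theorem gives $[w_1,\ldots,w_{2m+2}]_{H(M)}=\cC(H(M))$ automatically, with no need to worry about total realness or polynomial convexity of a singular image. Proposition~\ref{proppen}(i) then yields $\widehat{F(M)}=\bigcup_{r\in\R^{2m+2}}\widehat{F(M)\cap H^{-1}(r)}$, and since each fiber over $r\neq 0$ is a single point while the fibers over $r$ with first $2m+1$ entries zero assemble to $\cG_f(\Gamma)$, one reads off $\widehat{F(M)}\setminus F(M)=\widehat{\cG_f(\Gamma)}\setminus\cG_f(\Gamma)$ directly. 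Your setup already has the single-point-fiber property (since $H|_{M\setminus T}$ is injective), so if you simply replace your complex-valued $H$ by a real-valued one and invoke Proposition~\ref{proppen}, the hull computation becomes a two-line argument and the Kallin/Rossi machinery is unnecessary. The paper also carries out a Whitney-type projection (its Step~2) to justify that real-valued $H$ with these properties exists in dimension $2m+1$; your construction of $H$ is only sketched and would benefit from a similar argument.
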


Even in dimension 1, it is trivial that Corollary~\ref{cor1} fails without the pluriharmonicity hypothesis since $h$ could be holomorphic on a nonempty proper open subset of $\Omega$ while not agreeing with the boundary values of a holomorphic function on $b\Omega$.  The next result provides a more interesting illustration of what can go wrong in Corollary~\ref{cor1}, even in dimension~1, without the pluriharmonicity hypothesis.  (Here we denote by $\cR(K)$ the uniform closure on $K$ of the rational functions holomorphic on a neighborhood of $K$.)

\begin{theorem}\label{smoothexample1}
Let   $\Omega\subset \C^1$ be a bounded open set.  There exist functions $f_1,f_2,f_3\in \cC^\infty(\oO)$ and a compact set $K\subset\Omega$ such that the following hold:
\begin{itemize}
\item[(1)] $\mathcal G_f(\oO)$ is polynomially convex, 
\item[(2)]  the Shilov boundary of $[z,f]_{\overline\Omega}$ is $\oO$, so in particular there is no disk where all the $f_j$'s are holomorphic,
\item[(3)] $\cC(\overline\Omega)\cap\mathcal \cR(K)\subset[z,f]_{\overline\Omega}$, and
\item[(4)] $[z,f]_{\overline\Omega}\neq\cC(\overline\Omega)$.
\end{itemize}
\end{theorem}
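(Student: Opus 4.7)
The plan is to build a Swiss-cheese–based example. I would begin by selecting a Swiss cheese $K\subset\Omega$: a compact set of the form $\overline{D_0}\setminus\bigcup_j D_j$, with $\overline{D_0}\subset\Omega$ a closed disk and $\{D_j\}\subset D_0$ disjoint open disks dense in $D_0$, chosen so that $K$ is polynomially convex, has empty interior, satisfies $\cR(K)\neq \cC(K)$, and every point of $K$ is a peak point for $\cR(K)$. The existence of such a $K$ goes back to McKissick and its refinements. Write $\oO\setminus K=\bigsqcup_j U_j$ as the disjoint union of its open components, and construct $\rho\in\cC^\infty(\oO)$ non-negative, equal to zero exactly on $K$, and flat along $K$; one produces $\rho$ as a convergent sum $\sum_j \varepsilon_j\psi_j$ of smooth bumps $\psi_j$ supported in the $U_j$'s with suitably small $\varepsilon_j$.

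The functions $f_1,f_2,f_3\in\cC^\infty(\oO)$ are then required to vanish flatly on $K$ and, on each $\overline{U_j}$, to generate $\cC(\overline{U_j})$ together with $z$. A workable concrete choice is $f_1=\rho$, $f_2=\rho\overline{z}$, $f_3=\rho\overline{z}^{\,2}$: on each $\overline{U_j}$ the graph of $f$ is totally real away from $\partial U_j$ and matches continuously to $K$ along $\partial U_j$, so, thanks to the flat vanishing of $\rho$ and a Stout-type polynomial approximation theorem on totally real manifolds, $[z,f_1,f_2,f_3]_{\overline{U_j}}=\cC(\overline{U_j})$.

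With these choices, (4) is immediate: since $f_j|_K\equiv 0$, the restriction map $[z,f]_{\oO}\to \cC(K)$ lands in $P(K)\subset \cR(K)$, so any Tietze extension of an element of $\cC(K)\setminus\cR(K)$ lies outside $[z,f]_{\oO}$. For (3), given $h\in \cC(\oO)$ with $h|_K\in \cR(K)$, I would approximate $h|_K$ uniformly on $K$ by rational functions with poles in $\bigcup_j U_j$, simultaneously approximate $h|_{\overline{U_j}}$ inside $[z,f]_{\overline{U_j}}=\cC(\overline{U_j})$, and patch using cut-offs supported in the $U_j$'s; the flatness of $\rho$ on $K$ is what absorbs the mismatches across $\partial U_j$ and controls uniform convergence on $\oO$. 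Once (3) is established, (2) follows: points of $\oO\setminus K$ are peak points via $[z,f]_{\overline{U_j}}=\cC(\overline{U_j})$, and points of $K$ are peak points because peak functions for $\cR(K)$ lift through (3) to peaking functions in $[z,f]_{\oO}$. Finally (1) follows from (2) by Rossi's local maximum principle, since (2) forces the polynomially convex hull of $\mathcal G_f(\oO)$ to coincide with $\mathcal G_f(\oO)$.

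The main obstacle is the patching step in (3): the rational approximants on $K$ may blow up inside the holes $U_j$, and they must be fused with the smooth approximants on $\oO\setminus K$ into a single polynomial in $(z,f_1,f_2,f_3)$ uniformly close to $h$ on all of $\oO$. The flat vanishing of $\rho$ along $K$ — built into the construction — is the mechanism that makes this possible, since multiplying by sufficiently high powers of $\rho$ damps out discrepancies near $K$ faster than any rational blow-up in the $U_j$'s can produce.
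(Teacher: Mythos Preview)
Your proposal has a structural gap that cannot be repaired with the chosen generators. Since $f_1=\rho$, $f_2=\rho\bar z$, $f_3=\rho\bar z^{\,2}$ all vanish identically on $K$, every polynomial in $(z,f_1,f_2,f_3)$ restricts on $K$ to a polynomial in $z$ alone, so $[z,f]_{\oO}\big|_K=P(K)$. But condition~(3) requires $\{h\in\cC(\oO):h|_K\in\cR(K)\}\subset[z,f]_{\oO}$; restricting both sides to $K$ would force $\cR(K)\subset P(K)$, hence $P(K)=\cR(K)$, hence $K$ polynomially convex. A Swiss cheese $\overline{D_0}\setminus\bigcup_j D_j$ never is (each $D_j$ is a bounded component of the complement), and in any case a nowhere-dense polynomially convex planar compact satisfies $P(K)=\cC(K)$ by Lavrentiev's theorem, which kills~(4). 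So your hypotheses on $K$ are mutually inconsistent, and the patching step you flag as ``the main obstacle'' is in fact impossible, not merely delicate. A related symptom: you argue that every point of $\oO$ is a peak point for $[z,f]_{\oO}$, but the paper notes immediately after the statement (via the Anderson--Izzo peak-point theorem for two-manifolds) that this is incompatible with (1) and (4) in $\C^1$. Finally, (2)$\Rightarrow$(1) does not follow from Rossi's local maximum principle: the Shilov boundary of $P(X)$ can be all of $X$ while $\widehat X\supsetneq X$.

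The paper's remedy is to include among the three generators a smooth function $g$ on $\oO$ that, together with $z$, already generates $\cR(K)$ on $K$ (this is Basener's lemma). With $f_1=g$, $f_2=\rho$, $f_3=\rho\bar z$, a short Stone--Weierstrass argument identifies $[z,f]_{\oO}$ exactly with $\cA=\{h\in\cC(\oO):h|_K\in\cR(K)\}$, giving (3) and (4) at once; polynomial convexity of the graph (hence~(1)) then comes from Bear's theorem that the maximal ideal space of $\cA$ is $\oO$, with no appeal to peak points or Rossi.
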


A theorem of John Anderson and the first author \cite{AI1} shows that it is not possible to strengthen condition (2) above to require that every point of $\oO$ be a peak point for $[z,f]_{\overline\Omega}$.  However, in $\C^n$, for $n\geq 2$, this stronger condition can be achieved as well.

\begin{theorem}\label{smoothexample2}
Let   $\Omega\subset \C^n$, with $n\geq 2$, be a bounded open set.  There exist functions $\row fN\in \cC^\infty(\oO)$ and a compact set $K\subset\Omega$ such that  the conditions in Theorem~\ref{smoothexample1} are satisfied with condition (2) replaced by:
\begin{itemize}
\item[($2'$)]  every point of $\oO$ is a peak point for $[z,f]_{\overline\Omega}$, so in particular there is no analytic disk in $\Omega$ on which all the $f_j$'s are holomorphic.
\end{itemize}
\end{theorem}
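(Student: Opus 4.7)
Plan.

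I would adapt the construction used for Theorem~\ref{smoothexample1}, exploiting the transverse complex coordinates $z_2, \ldots, z_n$ available when $n \geq 2$ to install peak functions at every point of $\overline\Omega$.

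After translating $\Omega$, choose a complex line $\ell = \{z_2 = \cdots = z_n = 0\}$ meeting $\Omega$ in a nonempty open set $\Omega' := \Omega \cap \ell$. Apply the construction from the proof of Theorem~\ref{smoothexample1} to $\Omega'$ to obtain smooth functions $f_1', f_2', f_3' \in \cC^\infty(\overline{\Omega'})$ and a compact set $K \subset \Omega'$ satisfying conditions (1)--(4) of Theorem~\ref{smoothexample1}. Refine the underlying Swiss cheese to be a McKissick cheese, so that in addition every point of $K$ is a peak point for $\cR(K)$.

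Next, extend to $\overline\Omega$ by choosing smooth cutoffs $\chi, \psi \in \cC^\infty(\overline\Omega)$ with $\chi \equiv 1$ on a tube around $\ell$ and $\psi \equiv 0$ on a smaller tube around $\ell$ while $\psi \equiv 1$ outside a slightly larger tube. Set
\[
f_j(z) := \chi(z)\, f_j'(z_1) \quad (j=1,2,3), \qquad g_j(z) := \psi(z)\, \overline{z_j} \quad (j=2,\ldots,n),
\]
and define $A := [z, f_1, f_2, f_3, g_2, \ldots, g_n]_{\overline{\Omega}}$. By construction $f_j|_K = f_j'|_K$ and $g_j|_K = 0$, so the restriction $A|_K$ lies in $\cR(K)$ and properness~(4) is immediate from $\cR(K) \neq \cC(K)$. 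Polynomial convexity~(1) and the $\cR(K)$-containment~(3) transfer from the planar case: outside the tube, $g_j$ equals $\overline{z_j}$ (so the generated algebra is locally $\cC(\overline\Omega)$-like), and polynomial convexity follows from Rossi's local maximum principle applied both along $\ell$ (using Theorem~\ref{smoothexample1}(1)) and transversally (using the antiholomorphic generators).

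The crux is condition $(2')$. For $p$ lying outside the tube around $\ell$, the polynomial $1 - \sum_{j \geq 2}(z_j - p_j) \overline{(z_j-p_j)}$ lies in $A$ (the $g_j$'s supply $\overline{z_j}$) and peaks at $p$. For $p \in \overline{\Omega'} \setminus K$ the smoothness of the $f_j'$ near $p$ on $\overline{\Omega'}$ yields a peak function in $[z_1, f_j']_{\overline{\Omega'}}$, which one lifts to $A$ using $\chi$ and multiplies by the transverse damping factor $1 - \epsilon \sum_{j \geq 2}|z_j|^2 \in A$. For $p \in K$, combine the McKissick peak function from $\cR(K) \subset [z_1, f_j']_{\overline{\Omega'}}$ with the same transverse damping factor to obtain a function in $A$ peaking at $p$. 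The main obstacle is precisely this last case: one must verify that the polynomial approximations of the planar peak functions, lifted via the cutoff $\chi$, remain uniformly bounded across the cutoff region, so that the transverse damping factor genuinely forces the modulus strictly below $1$ off $\ell$. This is where the hypothesis $n \geq 2$ enters essentially—the transverse coordinates supply both the antiholomorphic generators and the damping factor—and it is precisely this flexibility that is ruled out in one variable by the Anderson--Izzo theorem \cite{AI1}.
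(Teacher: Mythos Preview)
Your approach is quite different from the paper's and contains a genuine gap.

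The paper does not bootstrap from a one-dimensional Swiss cheese at all. Instead it invokes Basener's example: a rationally convex compact set $K\subset\C^2$ (placed inside $\Omega\subset\C^n$ by translation and rescaling) for which $\cR(K)\neq\cC(K)$ and \emph{every point of $K$ is already a peak point for $\cR(K)$}. One then sets $\cA=\{f\in\cC(\oO):f|_K\in\cR(K)\}$ and obtains smooth generators $z_1,\ldots,z_n,g,\rho,\rho\overline z_1,\ldots,\rho\overline z_n$ exactly as in the proof of Theorem~\ref{smoothexample1}, via Lemma~\ref{generators}. Condition~$(2')$ is then essentially free: a peak function for $\cR(K)$ at $p\in K$ extends (Tietze, then multiply by a continuous function equal to $1$ on $K$ and $<1$ off $K$) to a peak function for $\cA$ at $p$, while for $p\notin K$ one uses that $\cA$ contains every continuous function vanishing on $K$. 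No cutoffs, no transverse damping. The hypothesis $n\geq 2$ enters only to make room for Basener's two-dimensional $K$.

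Your route has a concrete gap. The ``transverse damping factor'' $1-\epsilon\sum_{j\geq 2}|z_j|^2$ is \emph{not} in your algebra $A$: the only antiholomorphic ingredients you have are $g_j=\psi\,\overline{z_j}$, and since $\psi\equiv 0$ on a full tube around $\ell$, the function $1-\epsilon\sum_j z_j g_j=1-\epsilon\,\psi\sum_j|z_j|^2$ that actually lies in $A$ is identically $1$ on that entire tube and damps nothing there. The same defect undermines your peak function for $p$ outside the tube: $1-\sum_{j\geq 2}|z_j-p_j|^2$ is not in $A$, and even the available substitute peaks along a whole complex line parallel to $\ell$, on which your generators reduce to polynomials in $z_1$ alone, so the maximum principle blocks peaking at the single point $p$. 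Finally, you identify but do not resolve the ``main obstacle'' of controlling the lifts of the planar peak functions across the cutoff region; functions in $A$ that approximate a given element of $A|_{\overline{\Omega'}}$ need not be uniformly bounded on $\oO$, so multiplication by a damping factor (even a correct one) does not obviously suffice.
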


Although a nontrivial polynomially convex hull need not contain analytic structure, one could ask whether the presence of a (smooth) manifold of dimension at least 2 in $\hat X\setminus X$ ($X$ a compact set in $\C^n$) implies the existence of analytic structure.  The answer is no.  In fact, given positive integers $k<n$, there is a compact set $X\subset b\B_n\subset \C^n$ such that $\hat X\setminus X$ contains a smooth $k$-manifold but contains no analytic disk.  This follows immediately from the following result of Julien~Duval and Norman~Levenberg by taking the set $K$ there to be defined by $K=\{ (x_1,\ldots, x_k,0,\ldots,0)\in \R^n\subset\C^n: \sum_{j=1}^k |x_j|^2\leq 1/2\}$.

\begin{theorem}[Duval--Levenberg \cite {DL}] \label{duval}
If $K$ is a compact, polynomially convex subset of the ball $\B_n\subset\C^n$, $n\geq 2$, then there is a compact subset $X$ of $b\B_n$ such that $\hat X\supset K$ and such that the set $\hat X\setminus K$ contains no analytic disk.
\end{theorem}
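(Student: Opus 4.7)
The plan is to produce $X$ as the closure of an increasing union $\overline{\bigcup_k Y_k}$, where each $Y_k\subset b\B_n$ is a finite union of boundary circles of affine analytic disks $L_{k,i}\cap\bar\B_n$, chosen so that the hulls $\hat{Y_k}$ form a controlled exhaustion of $K$ while leaving as little room as possible in $\bar\B_n$ for disks outside $K$.

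The first step is a sweeping lemma of the following form: given a polynomially convex compact $K'\Subset\B_n$ and $\varepsilon>0$, there is a finite union $Y$ of boundary circles of affine analytic disks in $\bar\B_n$ with $K'\subset\hat Y\subset\{z:\mathrm{dist}(z,K')<\varepsilon\}$. Fix a finite $\varepsilon/2$-net in $K'$ and, through each net point, pass a short complex affine segment whose endpoints lie on $b\B_n$; the corresponding closed affine disk has its boundary circle in $b\B_n$ and lies in the $\varepsilon$-neighborhood of $K'$. The inclusion $K'\subset\hat Y$ follows because each net point lies in the hull of its own boundary circle, together with Oka--Weil approximation and polynomial convexity of $K'$; the inclusion $\hat Y\subset\{\mathrm{dist}(\cdot,K')<\varepsilon\}$ follows because the union of $K'$ and the (short) affine disks is itself polynomially convex and $\varepsilon$-close to $K'$.

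Now write $K=\bigcap_k P_k$ as a decreasing intersection of polynomial polyhedra with $P_k\Subset \B_n$, and apply the sweeping lemma with $\varepsilon_k\to 0$ (chosen so that $\{\mathrm{dist}(\cdot,P_k)<\varepsilon_k\}\subset P_{k-1}$) to obtain $Y_k\subset b\B_n$ with $P_k\subset\hat{Y_k}\subset\{\mathrm{dist}(\cdot,P_k)<\varepsilon_k\}$. Setting $X=\overline{\bigcup_k Y_k}\subset b\B_n$, the containment $P_k\subset\hat{Y_k}\subset\hat X$ for every $k$ immediately yields $K\subset\hat X$.

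The main obstacle is ruling out analytic disks in $\hat X\setminus K$. If $\phi\colon\D\to\hat X\setminus K$ were a nonconstant analytic disk, pick any $\zeta_0\in\D$ with $\phi(\zeta_0)=p$; since $\mathrm{dist}(p,K)=\delta>0$, one can restrict $\phi$ to a smaller disk on which the image stays at distance $\geq\delta/2$ from $K$, and applying the maximum principle to $|z|^2$ shows that this restricted image lies in $\B_n$. To produce a contradiction, one must strengthen the sweeping lemma to a genericity statement: the affine disks at stage $k$ should be chosen generically so that the hull $\hat{Y_1\cup\cdots\cup Y_k}$ agrees with the union of the affine disks themselves outside the $\varepsilon_k$-neighborhood of $P_k$, and so that any analytic disk in this hull off $K$ must lie inside one of the finitely many affine disks used so far. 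A Baire-category selection of complex directions through the chosen nets, arranged so that no three of the affine lines from distinct stages share a common point off $K$, then forces $\phi$ to be constant. Carrying out this genericity argument while preserving the covering property of the sweeping lemma is the principal technical difficulty of the proof.
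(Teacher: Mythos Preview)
First, note that Theorem~\ref{duval} is quoted from \cite{DL} and is not proved in this paper; there is no ``paper's own proof'' to compare against.  Your proposal is an attempted outline of the Duval--Levenberg argument itself, so the right question is whether the outline is sound.

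There is a genuine gap, and it occurs already in your sweeping lemma.  You want, for each net point $p\in K'\Subset\B_n$, a ``short complex affine segment whose endpoints lie on $b\B_n$'' so that ``the corresponding closed affine disk \ldots\ lies in the $\varepsilon$-neighborhood of $K'$.''  These two requirements are incompatible.  If $L$ is any complex affine line through $p$, then $L\cap\overline{\B_n}$ is a round disk whose boundary circle lies in $b\B_n$ and whose radius is at least $\sqrt{1-|p|^2}$; since $K'\Subset\B_n$, this radius is bounded below by a positive constant depending only on $K'$, not on $\varepsilon$.  Thus the affine disk cannot be made $\varepsilon$-small, and your inclusion $\hat Y\subset\{\mathrm{dist}(\cdot,K')<\varepsilon\}$ fails.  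Consequently the later step, in which you deduce that $\hat X\setminus K$ is essentially a countable union of affine disks on which a Baire-type genericity argument can be run, has no foundation: without control on where $\hat Y_1\cup\cdots\cup Y_k$ sits, you have no description of $\hat X$ off $K$.

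The actual argument in \cite{DL} does build $X$ from boundary circles of analytic disks with boundaries in $b\B_n$, but it never claims the disks are small.  The point is rather to choose the disks (and an auxiliary exhaustion) so that one can \emph{prove} $\hat X$ is contained in $K$ together with the closure of the union of the chosen disks, and then to arrange the directions generically so that no nonconstant analytic disk can live in that union outside $K$.  Establishing the containment $\hat X\subset K\cup\overline{\bigcup D_{k,j}}$ is the heart of the matter and requires a more delicate inductive construction than a covering lemma; your sketch does not supply this.
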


However, we show that if there is an {\em open\/} subset of the hull $\hat X\setminus X$ that is a smooth manifold, then there necessarily is analytic structure in the hull.   More precisely we have the following result.

\begin{theorem}\label{astructureinhull}
Let $X$ be a compact set in $\C^n$.  Suppose $U$ is an open subset of $\hat X\setminus X$ and is a $\cC^1$-smooth submanifold of $\C^n$ of dimension at least 2.  Then there exists a dense open subset $\Omega$ of $U$ each component of which is an integrable CR-submanifold of $\C^n$.
\end{theorem}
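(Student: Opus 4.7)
The plan is to combine Rossi's local maximum modulus principle with upper semicontinuity of the CR dimension. For each $p \in U$, define the CR dimension $k(p) := \dim_{\C}\bigl(T_p U \cap J(T_p U)\bigr)$, where $J$ denotes multiplication by $i$ on $\C^n$. Since $U$ is $\cC^1$, the real tangent spaces vary continuously in the Grassmannian, and the dimension of a continuously varying intersection is upper semicontinuous, so $k$ is upper semicontinuous. Because $k$ takes integer values bounded between $0$ and $n$, a standard descent argument (iteratively pass to a point of strictly smaller $k$-value) shows that $\Omega := \{p \in U : k \text{ is locally constant at } p\}$ is open and dense in $U$. On each connected component of $\Omega$ the value $k_{0}$ of $k$ is constant, so $\Omega$ is a CR-submanifold of $\C^n$ on that component, with CR structure inherited from the integrable complex structure on $\C^n$ and hence automatically (formally) integrable.

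The core step is to show that $k(p) \geq 1$ for every $p \in U$. Suppose for contradiction that $k(p_{0}) = 0$ at some $p_{0} \in U$. By upper semicontinuity $U$ is totally real on an open neighborhood $V \subset U$ of $p_{0}$. Setting $m := \dim_{\R} U$, choose a $\C$-linear map $L : \C^n \to \C^m$ whose restriction to $T_{p_{0}}U$ is an $\R$-linear isomorphism onto the real subspace $\R^m \subset \C^m$; such an $L$ exists precisely because $T_{p_{0}}U$ is totally real. Parametrizing $V$ near $p_{0}$ by a $\cC^1$ map $\gamma : B(0,r) \subset \R^m \to \C^n$ with $\gamma(0) = p_{0}$ and $d\gamma|_{0}(\R^m) = T_{p_{0}}U$, one finds $L(\gamma(t)) - L(p_{0}) = t + o(|t|)$ as $t \to 0$. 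Hence the holomorphic polynomial
\[
P(z) := -\sum_{j=1}^{m}\bigl((Lz)_{j} - (Lp_{0})_{j}\bigr)^{2}
\]
satisfies $\operatorname{Re}P(\gamma(t)) = -|t|^{2} + o(|t|^{2})$, so $\operatorname{Re}P$ attains a strict local maximum on $V$ at $p_{0}$.

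Next, since $V$ is open in $U$ and $U$ is open in the compact Hausdorff space $\hat X$, $V$ is open in $\hat X$. Thus $p_{0}$ has a compact neighborhood $K \subset V$ (with nonempty interior $B$) on which $\operatorname{Re}P < 0$ outside $p_{0}$. Because $K$ is compact in $\hat X$, the closure of $B$ in $\hat X$ is contained in $K \subset V \subset \hat X \setminus X$. For $c > 0$ sufficiently large, $e^{cP}$ equals $1$ at $p_{0}$ and has modulus at most $e^{-c\varepsilon} < 1$ on $\partial B$ for some $\varepsilon > 0$. This makes $p_{0}$ a local peak point in the open set $\hat X \setminus X$ for the algebra $[z_1,\ldots,z_n]_{\hat X}$, contradicting Rossi's local maximum modulus principle, since the Shilov boundary of this algebra is contained in $X$. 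Hence $k \geq 1$ on $U$, and each component of $\Omega$ is an integrable CR-submanifold of $\C^n$ of positive CR dimension, as claimed.

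The main obstacle is the peak-function construction at a totally real point of a merely $\cC^1$ submanifold, where only a linear Taylor expansion of $\gamma$ is available; the trick of projecting linearly onto the complexified tangent space and then taking the negative sum of squares yields the needed quadratic bound $\operatorname{Re}P \leq -|t|^{2}/2$ even in the $\cC^1$ regularity class. A secondary subtlety is to select the test neighborhood $B$ so that its $\hat X$-closure remains inside $V$, and therefore disjoint from $X$; this is handled via compactness of sufficiently small neighborhoods in the locally compact space $V$.
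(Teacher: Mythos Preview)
Your argument that $k \geq 1$ on $U$ via a local peak-function construction is correct and parallels the paper's reasoning, and your density argument for the set where $k$ is locally constant is fine. However, there is a genuine gap: you have not established \emph{integrability} in the sense the paper requires. In Section~4 the paper defines a CR structure to be integrable when the real subbundle $N = TU \cap J(TU)$ integrates to a foliation whose leaves have tangent space $N$, so that the CR manifold is Levi-flat. Your remark that the induced CR structure is ``automatically (formally) integrable'' only gives the involutivity condition $[\bL,\bL]\subset\bL$, which holds for every CR submanifold of $\C^n$ and carries no content here. A strictly pseudoconvex hypersurface, for instance, is a CR submanifold of $\C^n$ with constant CR dimension and formally integrable CR structure, yet it is certainly not Levi-flat. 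The Levi-flatness is the whole point of the theorem: it is what produces the analytic disks in $\hat X \setminus X$.

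The paper obtains Levi-flatness by an additional argument you have omitted. Small closed neighborhoods of any point of $U$ inside $\hat X$ are polynomially convex, because $\hat X$ is the maximal ideal space of $[z_1,\ldots,z_n]_X$ and one can cut out $A$-convex neighborhoods (Lemma~\ref{aconvex}); then a theorem of Airapetyan asserts that a locally polynomially convex $\cC^1$ CR submanifold of complex Euclidean space is Levi-flat. Your local-maximum-principle idea is on the right track, but ruling out totally real points is only the first step; to reach Levi-flatness one must in effect rule out any nonzero eigenvalue of the Levi form, and this is precisely what local polynomial convexity together with Airapetyan's result accomplishes.
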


More generally we have the following result about uniform algebras.

\begin{theorem}\label{stoutgen}
Let $\cA$ be a uniform algebra.  Suppose that $U$ is an open subset of the maximal ideal space of $\cA$ disjoint from the Shilov boundary of $\cA$ and that $U$ is a manifold of real dimension $n\geq 2$ with a differentiable structure such that the collection of functions in $\cA$ that are $\cC^1$ on $U$ is dense in $\cA$.  Then there exists a dense open subset $\Omega$ of $U$ each component of which has an integrable CR-structure $\cF$ such that every function in $\cA$ is holomorphic along the leaves of $\cF$.
\end{theorem}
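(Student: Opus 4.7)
The plan is to read off a CR structure on $U$ from the complex differentials of the $\cC^1$-members of $\cA$, to show this structure is nondegenerate and involutive on a dense open subset by using the injection $U\hookrightarrow\mathrm{MaxSpec}(\cA)$, and then to use the Shilov boundary hypothesis via Rossi's local maximum modulus principle to integrate it to a foliation by complex leaves. Set $\cA_1=\{f\in\cA:f|_U\in\cC^1(U)\}$; this is dense in $\cA$ by hypothesis. For $p\in U$ I would define
$$
W_p=\mathrm{span}_\C\{df_p:f\in\cA_1\}\subset T^*_pU\otimes_\R\C,\qquad d(p)=\dim_\C W_p\in\{0,1,\ldots,n\}.
$$
Since $d$ is integer valued and lower semi-continuous, the set on which it is locally constant is open and dense in $U$; there $W$ is a complex subbundle of $T^*_\C U$ and its annihilator $H\subset T_\C U$ is a complex subbundle of rank $n-d$.

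The first step is to verify $H\cap\overline H=0$ on a dense open subset. A direct computation identifies $H_p\cap\overline{H_p}$ with $V^\R_p\otimes_\R\C$, where $V^\R_p=\{X\in T_pU:Xf=0 \text{ for every } f\in\cA_1\}$, so it suffices to show $V^\R=0$ on a dense open set. Otherwise $V^\R$ has positive rank on some open set, and (upper semi-continuity of the rank lets us pass to a smaller open set where the rank is constant) admits a nonzero continuous section $X$. The identity $[X,Y]f=X(Yf)-Y(Xf)=0$ for sections of $V^\R$ and $f\in\cA_1$, combined with Peano's existence theorem, would yield a nonconstant $\cC^1$-curve $\gamma$ along which every $f\in\cA_1$ is constant; by density of $\cA_1$ in $\cA$, every $f\in\cA$ would be constant on $\gamma$, contradicting the injectivity of $U\hookrightarrow\mathrm{MaxSpec}(\cA)$. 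The same computation with sections of $H$ in place of $V^\R$ gives $[H,H]\subset H$, so $H$ is an integrable CR distribution.

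The Shilov boundary hypothesis is then used to integrate $H$ to a foliation by complex leaves. First, the case $d(p)=n$ on an open set must be ruled out: in that case $n$ functions in $\cA_1$ with linearly independent differentials at $p$ would embed $U$ locally as a totally real $n$-submanifold of $\C^n$, and by the theorem of Harvey--Wells every point would then be a peak point for polynomials, hence for $\cA$, putting $U$ in the Shilov boundary. Second, at every $p\in U$ with $d(p)<n$, I would apply Rossi's local maximum modulus principle on small closed neighborhoods of $p$ in $\mathrm{MaxSpec}(\cA)$ to force the common level set through $p$ of a $d$-tuple of $\cA_1$-functions (with independent differentials at $p$) to contain an analytic germ of pure complex dimension $n-d(p)$. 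Showing that these germs assemble on a dense open subset $\Omega\subset U$ into a genuine foliation $\cF$ whose $(0,1)$-tangent bundle is $H|_\Omega$ in particular establishes Levi-flatness of $H+\overline H$ on $\Omega$.

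Finally, for any $f\in\cA_1$ the defining property $Zf=0$ for $Z\in H$ is precisely the tangential Cauchy--Riemann equation along the leaves of $\cF$, so $f$ is holomorphic on each leaf. An arbitrary $f\in\cA$ is a uniform limit of $f_k\in\cA_1$; the restrictions $f_k|_L$ are holomorphic on each leaf $L$ and converge uniformly to $f|_L$, which is then holomorphic by Weierstrass' theorem. The main obstacle is the previous paragraph: assembling the local analytic germs furnished by Rossi's principle into a single foliation whose $(0,1)$-bundle matches the already-constructed $H$ requires Levi-flatness of $H+\overline H$ and a careful compatibility argument across overlapping germs, and it is here that the Shilov boundary hypothesis enters in a genuinely essential way rather than in merely ruling out the totally real case.
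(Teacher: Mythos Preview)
Your intrinsic setup via the annihilator bundle $H=\{Z\in T_\C U:Zf=0\text{ for all }f\in\cA_1\}$ is equivalent to the CR structure the paper constructs (there via local embeddings $f\colon U_x\to\C^k$ by functions in $\cA_1$; the pulled-back $(0,1)$-space of the embedding with minimal CR dimension is exactly your $H$). The density argument for $H\cap\overline H=0$ using point separation, and the exclusion of the totally real case $d=n$ via peak points, are both fine and parallel the paper. One regularity caveat: $H$ is cut out by the $\cC^0$ forms $df$ with $f\in\cC^1$, so it is only a $\cC^0$ subbundle, and the bracket computation $[Z,W]f=Z(Wf)-W(Zf)$ you invoke requires $\cC^1$ sections of $H$, which need not exist.

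The genuine gap is the integrability step. Formal involutivity $[H,H]\subset H$ is not what is needed; you need Levi-flatness, i.e.\ involutivity of $H+\overline H$, and you do not establish this. Your plan to extract ``analytic germs'' from Rossi's local maximum principle and patch them into a foliation is not a known argument: Rossi's principle only says that $|f|$ attains its maximum on the topological boundary of a compact set off the Shilov boundary; it does not by itself produce analytic discs or complex submanifolds, and you yourself flag this as the main obstacle. The paper's route is quite different and avoids this entirely. It embeds a neighborhood of each point into $\C^k$ via functions in $\cA_1$; by the Baouendi--Treves approximation theorem (valid in $\cC^1$), every $g\in\cA$ is locally a uniform limit of polynomials on the embedded image, so the restriction of $\cA$ coincides with the polynomial algebra there; an elementary $A$-convexity lemma then identifies the maximal ideal space of this restriction with the embedded set, giving local polynomial convexity. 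Finally, a theorem of Airapetyan says that a $\cC^1$ CR-submanifold of $\C^k$ that is locally polynomially convex is Levi-flat, which yields the foliation directly. Note in particular that in the paper the Shilov-boundary hypothesis is used \emph{only} to rule out totally real points; integrability comes from local polynomial convexity plus Airapetyan, not from Rossi's principle.
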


In \cite{Stout} Stout studied uniform algebras whose maximal ideal spaces are $\cC^1$-smooth surfaces and which admit sets of $\cC^1$-smooth generators, and he showed that such algebras consist of functions holomorphic off their Shilov boundaries.  In connection with this result he raised the question whether there is also always analytic structure when the maximal ideal space is a manifold of higher dimension \cite[Section~5, Question~2]{Stout2}.  The above result answers this question of Stout in the affirmative.  Specifically specializing Theorem~\ref{stoutgen} to the case in which the maximal ideal space is a manifold gives the following.

\begin{theorem}\label{stout}
Let $M$ be a compact $\mathcal C^1$-smooth manifold (possibly with boundary) of 
real dimension $n\geq 2$, and let $\mathcal A$ be a uniform 
algebra on $M$ generated by $\mathcal C^1$-smooth functions. 
Assume further that the maximal ideal space of $\mathcal A$ is $M$, and 
let $\Gamma_\cA$ denote the Shilov boundary of $\mathcal A$.  Then there exists
a dense open subset $\Omega$ of $M\setminus\Gamma_\cA$ each component of which has an integrable 
CR-structure $\mathcal F$ such that every function in $\mathcal A$
is holomorphic along the leaves of $\mathcal{F}$.
\end{theorem}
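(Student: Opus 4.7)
The plan is to derive Theorem~\ref{stout} as a direct corollary of Theorem~\ref{stoutgen}, whose hypotheses we will check one by one. The key observation is that when the entire maximal ideal space is a smooth manifold and the generators are globally $\mathcal C^1$-smooth, the density hypothesis on $\mathcal C^1$-smooth elements of $\cA$ required by Theorem~\ref{stoutgen} is essentially automatic.

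First I would set $U = M \setminus \Gamma_\cA$. Since $\Gamma_\cA$ is closed in the maximal ideal space of $\cA$, and that maximal ideal space is $M$ by hypothesis, $U$ is an open subset of the maximal ideal space of $\cA$ that is disjoint from the Shilov boundary. Because $M$ is a $\mathcal C^1$-smooth manifold (possibly with boundary) of real dimension $n \geq 2$, the open subset $U$ inherits that same differentiable structure and is itself a $\mathcal C^1$-smooth manifold of real dimension $n \geq 2$. (If $U$ is empty the theorem is trivial, so we may assume $U \neq \emptyset$.)

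Next I would verify the density hypothesis of Theorem~\ref{stoutgen}. Let $g_1,\ldots,g_k \in \cA$ be a system of $\mathcal C^1$-smooth generators of $\cA$, which by hypothesis exist on all of $M$. Every polynomial in $g_1,\ldots,g_k$ is then $\mathcal C^1$-smooth on $M$, hence in particular $\mathcal C^1$-smooth on the open subset $U$. By definition of "generated by", the collection of such polynomials is uniformly dense in $\cA$, so the subset of $\cA$ consisting of functions that are $\mathcal C^1$-smooth on $U$ is dense in $\cA$.

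With the three hypotheses of Theorem~\ref{stoutgen} in place (openness of $U$ in the maximal ideal space, disjointness from $\Gamma_\cA$, manifold structure of real dimension $\geq 2$, and density of $\mathcal C^1$-smooth-on-$U$ elements), I would simply invoke Theorem~\ref{stoutgen} to obtain a dense open subset $\Omega$ of $U = M \setminus \Gamma_\cA$ each of whose components carries an integrable CR-structure $\mathcal F$ along the leaves of which every function in $\cA$ is holomorphic. That is precisely the conclusion sought. There is no genuine obstacle in the reduction itself; all of the substantive analytic and CR-geometric content is carried by Theorem~\ref{stoutgen}, and Theorem~\ref{stout} serves to record the geometrically clean case in which the ambient maximal ideal space is itself a smooth manifold, thereby answering Stout's question affirmatively.
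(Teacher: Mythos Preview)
Your proposal is correct and matches the paper's approach exactly: the paper presents Theorem~\ref{stout} simply as the specialization of Theorem~\ref{stoutgen} to the case where the maximal ideal space is itself the manifold $M$, without giving a separate proof. One small remark: the hypothesis does not require finitely many generators, but your density argument works unchanged since polynomials in any (possibly infinite) family of $\mathcal C^1$ generators are still $\mathcal C^1$ and dense in $\cA$.
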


We remark that the dense open set $\Omega$ in Theorems~\ref{astructureinhull}--\ref{stout} may have several connected components 
and the CR-dimension of different components may be different.


\section{Proof of Theorems \ref{main1} and \ref{foliation}}

We will prove the equivalence of conditions (1), (3), and (4) in Theorem~\ref{main1} first.
Theorem~\ref{foliation} will follow essentially as a bi-product of this proof, and the equivalence of conditions (1) and (2) in Theorem~\ref{main1} is an immediate consequence of Theorem~\ref{foliation}.
We begin with the following proposition which is the key observation.
(Given an analytic set $Z\subset\Omega$, we denote the set of regular points of $Z$ by $Z_{{\rm reg}}$, the set of singular points by $Z_{{\rm sing}}$, and the inclusion map of $Z_{{\rm reg}}$ into $\Omega$ by $i_{Z_{{\rm reg}}}$.)

\begin{proposition}\label{main}
Let $\Omega\subset\mathbb C^n$ be a bounded domain, and let $h_j\in PH(\Omega)\cap\cC(\overline\Omega)$
for $\jN$.  Suppose there is an irreducible analytic set $Z\subset\Omega$ of dimension $d\geq 1$ such that $i^*_{Z_{{\rm reg}}}(\overline\partial h_{i_1}\wedge\cdot\cdot\cdot\wedge\overline\partial h_{i_d})\equiv 0$
for all $(i_1,\ldots,i_d)$.  Then $\mathcal G_h(b\Omega)$ is not polynomially convex. 
\end{proposition}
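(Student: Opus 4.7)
My strategy is to construct a closed, positive-dimensional analytic subvariety $W \subset \Omega$ on which all the $h_j$'s are holomorphic, and then use the maximum principle for plurisubharmonic functions on an analytic set to show that the graph $\mathcal{G}_h(W)$ lies in $\widehat{\mathcal{G}_h(b\Omega)} \setminus \mathcal{G}_h(b\Omega)$, forcing $\mathcal{G}_h(b\Omega)$ to fail to be polynomially convex.

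For the local structure I fix a point $p \in Z_{\mathrm{reg}}$ and choose a simply connected neighborhood $U \subset \Omega$ of $p$. Pluriharmonicity lets me write $h_j = F_j + \overline{G_j}$ on $U$ with $F_j, G_j$ holomorphic, so $\overline\partial h_j = \overline{\partial G_j}$. The wedge hypothesis is exactly the statement that the holomorphic map $G = (G_1, \ldots, G_N) \colon U \cap Z_{\mathrm{reg}} \to \mathbb{C}^N$ has rank strictly less than $d$ at every point. Choosing $p$ at a generic point where this rank attains its maximum value $r < d$, the fiber $G^{-1}(G(p)) \cap U \cap Z$ is a closed analytic subset of $U \cap Z$ of dimension $d-r \geq 1$ through $p$; since the $G_j$'s are constant on this fiber, each $h_j = F_j + \mathrm{const}$ restricts to a holomorphic function. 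Closing up an irreducible component of this local plaque within the ambient irreducible variety $Z$ (via standard extension results for analytic subvarieties) yields a closed analytic subvariety $W$ of $Z$, hence of $\Omega$, of positive dimension along whose regular part each $h_j$ is holomorphic.

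Since $W$ is closed in the bounded open set $\Omega$, the closure $\overline W \subset \overline\Omega$ is compact with $\overline W \setminus W \subset b\Omega$; moreover, because $\mathbb{C}^n$ contains no compact positive-dimensional analytic subvarieties, every irreducible component of $W$ has limit points in $b\Omega$. For every polynomial $P$ on $\mathbb{C}^{n+N}$, the function $\phi_P(z) := P(z, h(z))$ is continuous on $\overline\Omega$ and holomorphic on $W_{\mathrm{reg}}$, so $|\phi_P|$ is plurisubharmonic on $W_{\mathrm{reg}}$, and the maximum principle on the compact analytic set $\overline W$ gives
\[
|\phi_P(p)| \;\leq\; \max_{\overline W} |\phi_P| \;=\; \max_{\overline W \setminus W} |\phi_P| \;\leq\; \max_{b\Omega} |\phi_P| \;=\; \max_{\mathcal{G}_h(b\Omega)} |P|.
\]
As $P$ was arbitrary, $(p, h(p)) \in \widehat{\mathcal{G}_h(b\Omega)}$; since $p \in \Omega$, this point is not in $\mathcal{G}_h(b\Omega)$, and so $\mathcal{G}_h(b\Omega)$ is not polynomially convex.

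The main obstacle I expect is the globalization step: turning the local fiber of $G$ into a genuinely closed analytic subvariety of $\Omega$ on which the $h_j$'s remain holomorphic. In the simply connected case the global primitives $G_j$ give this almost for free, but in general the local primitives differ by locally constant monodromy. One must check that the loci $\{G_{\mathrm{loc}} = G_{\mathrm{loc}}(p)\}$ glue coherently across charts and that the closure of the resulting immersed leaf inside the irreducible analytic set $Z$ remains analytic of the expected dimension --- this is where irreducibility of $Z$ and closure-of-analytic-set arguments enter essentially.
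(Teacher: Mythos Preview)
Your local analysis is correct and matches the paper's: near a regular point of $Z$, writing $h_j = F_j + \overline{G_j}$ with $F_j, G_j$ holomorphic, the hypothesis forces the fibres of $G|_Z$ to have positive dimension, and each $h_j$ is holomorphic along these fibres. The gap is exactly where you locate it, but it is not a technicality that can be patched --- it is fatal to the approach. There need not exist \emph{any} closed positive-dimensional analytic subvariety $W\subset\Omega$ on which all the $h_j$ are holomorphic. Take $\Omega=A\times A$ with $A=\{1/2<|z|<2\}$, $Z=\Omega$, $N=1$, and $h_1=\log|z_1|-\alpha\log|z_2|$ with $\alpha\in(0,1)$ irrational. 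The hypothesis holds trivially (any $2$-fold wedge of a single $\overline\partial h_1$ vanishes), and the local plaques are pieces of the curves $z_1=c\,z_2^{\alpha}$. But the leaf through any point meets each circle $\{z_2=\text{const}\}$ in a dense subset of a circle in the $z_1$-annulus, so no leaf is closed in $\Omega$; and since $h_1$ is real-valued, any closed curve on which it is holomorphic would lie in a level set of $h_1$ and, lifting to the universal cover, would have to contain a $\mathbb{Z}^2$-orbit of complex lines with dense image --- impossible for a one-dimensional analytic set. So the ``standard extension results'' you invoke do not exist here, and your maximum-principle step, which needs $\overline W\setminus W\subset b\Omega$, cannot begin.

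The paper sidesteps this completely. Instead of seeking a global $W$, it works only with the \emph{local} plaques of the singular foliation of $Z$ and argues by contradiction. If $\mathcal G_h(b\Omega)$ were polynomially convex, some polynomial $P$ would satisfy $\sup_{\overline Z}|P(z,h(z))|>\sup_{b\Omega}|P(z,h(z))|$, and the compact set $K=\{z\in Z:P(z,h(z))=P(z_0,h(z_0))\}$ would be a union of plaques, by the maximum principle applied plaque-by-plaque. One then slices: take $p\in K$ maximizing $|z_1|$; the set $K\cap\{z_1=p_1\}$ is again compact, nonempty, and (by the same maximum principle) a union of plaques. Repeating through $z_2,\ldots,z_n$ produces a plaque contained in a single point, contradicting that all plaques have dimension $\geq 1$. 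This slicing trick is the idea your proposal is missing; it replaces the need for any global leaf with purely local foliation structure plus compactness.
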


\begin{lemma}\label{analyticset}
Let $\Omega\subset\mathbb C^n$ be a domain, let $h_j\in PH(\Omega)$
for $\jN$, and let $Z\subset\Omega$ be an irreducible analytic set of dimension $d\geq 1$.
Let $1\leq m\leq d$, fix $(i_1,\ldots,i_m)$, and define 
$$
Z':=\{z\in Z_{{\rm reg}}:i^*_{Z_{{\rm reg}}}(\overline\partial h_{i_1}\wedge\cdot\cdot\cdot\wedge\overline\partial h_{i_m})(z)=0\}.
$$
Then $\tilde Z:=Z'\cup Z_{{\rm sing}}$ is an analytic subset of $\Omega$.
\end{lemma}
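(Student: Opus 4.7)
The plan is to identify $\tilde Z$ locally in $\Omega$ as the intersection of $Z$ with the common vanishing locus of finitely many holomorphic functions; these functions will arise as coefficients of wedge products of a single globally defined holomorphic $(m,0)$-form against the $(n-d)$-wise wedge products of differentials of local defining equations for $Z$.

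The crucial first step uses pluriharmonicity essentially: from $\partial\overline\partial h_j=0$ one checks directly that
\[
\eta_j:=\partial\overline{h_j}=\sum_k\overline{\partial h_j/\partial\bar z_k}\,dz_k
\]
is a globally defined \emph{holomorphic} $(1,0)$-form on $\Omega$; it is simply the complex conjugate of the $(0,1)$-form $\overline\partial h_j$. Setting
\[
\omega:=\eta_{i_1}\wedge\cdots\wedge\eta_{i_m},
\]
we obtain a holomorphic $(m,0)$-form on $\Omega$. Since complex conjugation preserves vanishing at a point,
\[
Z'=\{z\in Z_{\rm reg}:\omega|_{T^{1,0}_zZ}=0\}.
\]

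Next I work locally. Fix $p\in\Omega$ and a neighborhood $V\ni p$ on which the ideal sheaf of $Z$ is generated by finitely many holomorphic functions $f_1,\ldots,f_r$. At a regular point $z\in Z_{\rm reg}\cap V$ the differentials $df_1(z),\ldots,df_r(z)$ have rank exactly $n-d$ and their common kernel is $T^{1,0}_zZ$; at a singular point of $Z$ this rank is strictly less than $n-d$. The following Koszul-type fact is standard: for linearly independent covectors $\ell_1,\ldots,\ell_{n-d}$ on a vector space $W$ with common kernel $K$, an $m$-covector $\alpha$ satisfies $\alpha|_K=0$ if and only if $\alpha\wedge\ell_1\wedge\cdots\wedge\ell_{n-d}=0$ in $\wedge^{m+n-d}W^*$.

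Combining these ingredients, define
\[
A_V:=\bigcap_{1\le l_1<\cdots<l_{n-d}\le r}\{z\in V:(\omega\wedge df_{l_1}\wedge\cdots\wedge df_{l_{n-d}})(z)=0\},
\]
which is the common zero set in $V$ of the holomorphic coefficient functions of these $(m+n-d,0)$-forms, hence is analytic in $V$. I claim $A_V\cap Z\cap V=\tilde Z\cap V$. At a regular $z\in Z_{\rm reg}\cap V$ some $(n-d)$-subset of the $df_l(z)$'s is linearly independent with common kernel $T^{1,0}_zZ$, so by the linear algebra fact above, $z\in A_V$ is equivalent to $\omega|_{T^{1,0}_zZ}=0$, i.e.\ to $z\in Z'$. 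At a singular $z\in Z_{\rm sing}\cap V$ every $(n-d)$-subset of the $df_l(z)$'s is dependent, so $df_{l_1}\wedge\cdots\wedge df_{l_{n-d}}$ itself vanishes at $z$ for every tuple and $z\in A_V$ automatically. Thus $\tilde Z\cap V=A_V\cap Z\cap V$ is an intersection of analytic sets in $V$; since $p$ was arbitrary, $\tilde Z$ is analytic in $\Omega$.

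The only conceptually nonroutine step is the first one, extracting a globally defined holomorphic $(1,0)$-form from the pluriharmonic hypothesis. The rest is bookkeeping with local defining equations combined with standard exterior algebra; in particular, the argument uses neither irreducibility of $Z$ nor any form of desingularization.
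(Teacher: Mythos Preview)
Your proof is correct and follows essentially the same route as the paper: both reduce to wedging a holomorphic $(m,0)$-form against $(n-d)$-tuples of differentials of local generators of the radical ideal sheaf of $Z$ and then invoke the Jacobian criterion to see that this cuts out exactly $Z'$ on $Z_{\rm reg}$ and all of $Z_{\rm sing}$. Your observation that $\eta_j=\partial\overline{h_j}=\overline{\bar\partial h_j}$ is a \emph{globally} defined holomorphic $(1,0)$-form is a mild streamlining of the paper's version, which instead passes through local harmonic conjugates $f_j$ on simply connected neighborhoods to get $\bar\partial h_j=\overline{df_j}/2$ (so that your $\eta_j$ is just $df_j/2$ locally); the remainder of the argument is identical.
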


\begin{proof}
Let $z_0\in \Omega$, and let $U_0$ be a simply connected neighborhood of $z_0$. Then for each $\jN$ there is a 
$g_j\in \mathcal{O}(U_0)$ such that $h_j+g_j$ is real and then also $f_j\in \mathcal{O}(U_0)$ such that
$Re(f_j)=h_j+g_j$. Notice that
\begin{equation}\label{torsk}
\debar h_j = \debar Re(f_j) = \overline{df_j}/2,
\end{equation} 
so that
\begin{equation}\label{istar}
i^*_{Z_{{\rm reg}}}(\debar h_{i_1}\wedge \cdots \wedge \debar h_{i_m})=0 \hbox{\ if and only if\ }
i^*_{Z_{{\rm reg}}}(df_{i_1}\wedge \cdots \wedge df_{i_m})=0.
\end{equation}
Hence, $Z'$ is an analytic subset
of $Z_{{\rm reg}}$.

Assume now that $z_0\in Z_{{\rm sing}}$ and choose (possibly after shrinking $U_0$) generators 
$\psi_1,\ldots,\psi_{\ell}$ for the radical ideal sheaf in $U_0$ of holomorphic functions
vanishing on $Z$.  Let
\begin{equation*}
W=\{\, z\in U_0: df_{i_1}\wedge \cdots \wedge df_{i_m}\wedge d\psi_{j_1}\wedge \cdots \wedge d\psi_{j_{n-d}}(z)=0\quad \forall \, (j_1,\ldots,j_{n-d})\}.
\end{equation*}
Then $W$ is an analytic subset of $U_0$.  A point $z\in Z\cap U_0$ lies in $Z_{{\rm reg}}$ if and only there is some choice of $(j_1,\ldots,j_{n-d})$ such that 
$d\psi_{j_1}\wedge \cdots \wedge d\psi_{j_{n-d}} (z) \neq 0$.  It follows that $W$ contains $Z_{{\rm sing}}\cap U_0$.
It also follows that for $z\in Z_{{\rm reg}}\cap U_0$ we have
$i^*_{Z_{{\rm reg}}}(df_{i_1}\wedge \cdots \wedge df_{i_m}) (z)=0$ if and only if 
$(df_{i_1}\wedge \cdots \wedge df_{i_m}\wedge d\psi_{j_1}\wedge \cdots \wedge d\psi_{j_{n-d}})(z)=0$ for all $(j_1,\ldots,j_{n-d})$.
Consequently, (\ref{istar}) gives $W\cap Z_{{\rm reg}}=Z'\cap U_0$, and the lemma follows. 
\end{proof}

\begin{proof}[Proof of Proposition \ref{main}]
Let $m$ be the largest integer such that there exist $h_{i_1},\ldots,h_{i_m}$ such that 
$i^*_{Z_{{\rm reg}}}(\debar h_{i_1}\wedge \cdots \wedge \debar h_{i_m})$ does not vanish identically; by hypothesis,
$m<d$. If all $h_i$ are holomorphic on $Z$ then the hull of $\mathcal{G}_h(b\Omega)$ contains 
$\mathcal{G}_h(Z)$ and we are done; we may thus assume that $m\geq 1$. 

We will first define a singular foliation $\mathcal{F}$ of $Z$ by non-trivial varieties such that all the $h_i$ are holomorphic along 
the plaques; see Definition~\ref{singularfol} for the precise meaning of this.
As in the beginning of the proof of Lemma~\ref{analyticset}
every $z_0\in \Omega$ has a neighborhood $U_0$ such that for all $j$, $Re(f_j)=h_j+g_j$, where the $f_j$ and the $g_j$ are
holomorphic in $U_0$. Notice that the $f_j$ (and the $g_j$) are uniquely defined up to adding constants;
the level sets of the map $F=(f_1,\ldots,f_N)\colon U_0 \to \C^N$ thus unambiguously defines a partitioning of $U_0$
by varieties. We define $\mathcal{F}$ by intersecting with $Z$. Clearly all the $h_j$ are holomorphic along each plaque.
We need to show that each plaque has dimension $\geq 1$. Let 
\begin{equation*}
\tilde{Z}=Z_{{\rm sing}}\cup \bigcap_{(i_1,\ldots,i_m)} \{z\in Z_{{\rm reg}}: i^*_{Z_{{\rm reg}}}(\debar h_{i_1}\wedge \cdots \wedge \debar h_{i_m})=0\},
\end{equation*}  
which is a proper analytic subset of $Z$ by Lemma~\ref{analyticset}.
Let $z_0\in Z\setminus \tilde{Z}\subset Z_{{\rm reg}}$ and assume that 
$i^*_{Z_{{\rm reg}}}(\debar h_{1}\wedge \cdots \wedge \debar h_{m})(z_0)\neq 0$. 
Then by \eqref{torsk}, 
$i^*_{Z_{{\rm reg}}}(df_{1}\wedge \cdots \wedge df_{m})(z_0)\neq 0$ and so we can choose local coordinates $\zeta_1,\ldots,\zeta_d$
for $Z_{{\rm reg}}$ centered at $z_0$ such that $f_1-f_1(z_0)=\zeta_1,\ldots,f_m-f_m(z_0)=\zeta_m$. 
By \eqref{torsk} and the choice of $m$ we have 
$i^*_{Z_{{\rm reg}}}(df_1\wedge \cdots \wedge df_m \wedge df_j)\equiv 0$ for all $j$, and so 
$\partial f_j/\partial \zeta_k \equiv 0$
for $j=m+1,\ldots,N$ and $k=m,\ldots,d$. Hence, the level set of $f_j|_Z$ through $z_0$ contains the common level set of 
$f_1|_Z,\ldots,f_m|_Z$ through $z_0$; the plaque through $z_0$ thus equals the latter set which has dimension $d-m\geq 1$.
Now, the function $Z\ni z \mapsto \textrm{dim}_{z}\, Z\cap (F^{-1}(F(z)))$ is $\geq 1$ on $Z\setminus \tilde{Z}$ and upper semicontinuous, see, \emph{e.g.},  
\cite[Ch.\ 2, Prop.\ 8.2]{Demailly}. 
Hence, the plaque through any point of $Z$ has dimension $\geq 1$.

\smallskip

Assume now to get a contradiction that $\mathcal{G}_h(b\Omega)$ is polynomially convex. Then there is a point $z_0\in Z$ and a polynomial
$P(z,w)$ in $\C^{n+N}$ such that
\begin{equation*}
\sup_{z\in \overline{Z}} |P(z,h(z))| = |P(z_0,h(z_0))| > \sup_{z\in b\Omega} |P(z,h(z))|.
\end{equation*}
It follows that the set $K:=\{z\in Z: P(z,h(z))=P(z_0,h(z_0))\}$ 
is a compact subset of $\Omega$. Since the $h_j$ are holomorphic
along the plaques of $\mathcal{F}$ it follows from the maximum principle (see, \emph{e.g.}, \cite[Ch.\ 4, Thm.\ 2G]{Whitney}) 
applied to $z\mapsto P(z,h(z))$ that
$K$ is a union of plaques of $\mathcal{F}$. Let $(p_1,\ldots,p_n)\in K$ be a point where $|z_1|$ attains its maximum on $K$. 
Then $K_1:=K\cap \{z_1=p_1\}$ is compact and non-empty and is, again by the maximum principle, a union of plaques of $\mathcal{F}$. 
Repeating for the rest of the coordinate functions $z_2,\ldots,z_n$ we see that there is a plaque of $\mathcal{F}$ contained in a point,
a contradiction.
\end{proof}

\begin{proof}[Proof of Theorem \ref{main1}] 
$(1) \Rightarrow (3)$:  
Assume that $\mathcal{G}_h(b\Omega)$ is polynomially convex and 
let $\varphi\colon\triangle\hookrightarrow\Omega$ be a holomorphic embedding.  
We will show that there exist $h_{i_1},\ldots,h_{i_d}$ and an irreducible analytic set $Z\subset\Omega$ of dimension $d$ 
such that $\dim(\varphi(\triangle)\cap(Z\setminus \tilde{Z}))=1$, where
$$
\tilde{Z}:=\{z\in Z_{{\rm reg}}:i^*_{Z_{{\rm reg}}}(\overline\partial h_{i_1}\wedge\cdots\wedge\overline\partial h_{i_d})(z)=0\}\cup Z_{{\rm sing}}.
$$
In that case, clearly all of the $h_{i_j}$ cannot be holomorphic along $\varphi(\triangle)$.

To obtain the analytic set $Z$, first we let 
\begin{equation}\label{Z1}
Z_1=\{z\in\Omega:\overline\partial h_{i_1}\wedge\cdots\wedge\overline\partial h_{i_n}(z)=0 \quad \forall (i_1,\ldots, i_n)\}.
\end{equation}
Then $\dim(Z_1)<n$ by Proposition~\ref{main} since $\mathcal{G}_h(b\Omega)$ is polynomially convex.   
If $\varphi(\triangle)$ is not contained in $Z_1$ then $\Omega$
works as $Z$ and we are done.  
Otherwise $\varphi(\triangle)$ is contained in an irreducible component of $Z_1$; abusing notation we denote this component 
by $Z_1$ and we define 
$$
Z_2=\{z\in ({Z_1})_{{\rm reg}}:i^*_{Z_1} (\overline\partial h_{i_1}\wedge\cdots\wedge\overline\partial h_{i_{d_1}})(z)=0 
\quad \forall (i_1,\ldots,i_{d_1})\}\cup (Z_1)_{{\rm sing}}, 
$$
where $d_1=\dim(Z_1)$.
By Lemma~\ref{analyticset} we have that $Z_2$ is an analytic subset of $\Omega$ and by Proposition \ref{main}
we have that $\dim(Z_2)<\dim(Z_1)$.  If $\varphi(\triangle)$ is not contained in $Z_2$ then $Z_1$ works as $Z$ and we are done.   
Repeating this process we eventually find the desired analytic set $Z$.

$(3)\Rightarrow (4)$: The proof of \cite[Theorem~1.3]{SW} given in \cite{SW} in fact shows that this implication holds.

$(4)\Rightarrow (1)$: By \cite[Lemma~4.6]{SW} $\mathcal G_h(\oO)$ is polynomially convex.  
Thus $\widehat{\mathcal G_h(b\Omega)}$ is contained in
$\mathcal G_h(\oO)$.  For any point $a$ in $\Omega$, by (3) there is a function $g$ in $[z,h]_{\oO}$ 
such that $g(p)=1$ and $g|b\Omega=0$.  Consequently there is a polynomial $p$ in $\C^{n+N}$ such that 
$p\bigl(a,h(a)\bigr)>\sup_{z\in b\Omega}p\bigl(z,h(z)\bigr)$, so $\bigl(a,h(a)\bigr)$ is not in $\widehat{\mathcal G_h(b\Omega)}$.  
Thus ${\mathcal G_h(b\Omega)}$ is polynomially convex.

$(1) \Rightarrow (2)$: Obvious.

$(2) \Rightarrow (1)$: This is immediate from Theorem~\ref{foliation} which we prove next. 
\end{proof}

\begin{proof}[Proof of Theorem~\ref{foliation}]
Assume that $\mathcal{G}_h(b\Omega)$ is not polynomially convex. Then by Theorem~\ref{main1} (2) there is an
embedded analytic disk $\varphi\colon \triangle \to \Omega$ such that each $h_j$ is holomorphic along $\varphi(\triangle)$.
The procedure in the proof of Theorem~\ref{main1}, $(1)\Rightarrow (2)$, gives a decreasing sequence $Z_1\supset Z_2 \supset \cdots$
of irreducible analytic subsets of $\Omega$ such that $\varphi(\triangle)\subset Z_j$ for all $j$.  
For dimensional reasons it follows that for some $k$, $Z_k=Z_{k+1}=\cdots$ and 
$\dim\,Z_k\geq 1$.
Moreover, from the construction of the sets $Z_j$ we have that 
\begin{equation*}
i^*_{Z_k} \left(\overline\partial h_{i_1} \wedge \cdots \wedge \overline\partial h_{i_{d_k}}\right) \equiv 0,
\quad \forall (i_1,\ldots,i_{d_k}),
\end{equation*}  
where $d_k=\dim\, Z_k$. As in the first part of the proof of Proposition~\ref{main}
we then get a singular foliation of $Z_k$ by non-trivial varieties such that all the $h_j$ are strongly holomorphic 
along the plaques.  
That $\mathcal G_h(Z)$ is contained in $\widehat{\mathcal G_h(b\Omega)}\setminus \mathcal G_h(b\Omega)$ is proved by repeating the argument in the last paragraph of the proof of Proposition~\ref{main}
\end{proof}


\section{Algebras generated by smooth functions}\label{noanalyticstructure}

In this section we prove Theorems~\ref{graphtorus}--\ref{smoothexample2}.
We will use the following version of \cite[Proposition~4.7]{SW}.

\begin{proposition}\label{proppen}
Let $K\subset \C^n$ be a compact set, let $F\colon K\to \C^k$ be the restriction to $K$ of a polynomial map, and
assume that $[w_1,\ldots,w_k]_{F(K)}=\mathcal{C}(F(K))$. Then the following holds:
\begin{itemize}
\item[(i)] $\widehat{K} = \bigcup_{w\in \C^k} \widehat{F^{-1}(w)}$

\item[(ii)] If $[z_1,\ldots,z_n]_{F^{-1}(w)} = \mathcal{C}(F^{-1}(w))$ for all $w\in \C^k$,
then $[z_1,\ldots,z_n]_{K} = \mathcal{C}(K)$.
\end{itemize}
\end{proposition}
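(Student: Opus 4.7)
The overall strategy is to use the density hypothesis on $F(K)$ to transfer polynomial manipulations in the $w$-variables back to polynomial manipulations in the $z$-variables via the polynomial map $F$.

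For (i), the inclusion $\widehat{F^{-1}(w)} \subset \widehat K$ is immediate from $F^{-1}(w) \subset K$. For the converse, let $p \in \widehat K$. For any polynomial $P(w)$ the composition $P \circ F$ is a polynomial on $\C^n$, so
\[
|P(F(p))| \le \sup_K |P \circ F| = \sup_{F(K)} |P|,
\]
hence $F(p) \in \widehat{F(K)}$. But the hypothesis $[w_1,\ldots,w_k]_{F(K)} = \mathcal{C}(F(K))$ forces $F(K)$ to be polynomially convex, so $F(p) = w_0$ for some $w_0 \in F(K)$. To show $p \in \widehat{F^{-1}(w_0)}$, I pick an arbitrary polynomial $Q(z)$; by continuity of $Q$ and compactness, for any $\varepsilon > 0$ there is a neighborhood $V$ of $w_0$ in $F(K)$ with $\sup_{F^{-1}(V)\cap K}|Q| \le \sup_{F^{-1}(w_0)}|Q|+\varepsilon$. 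Pick a continuous $\phi$ on $F(K)$ with $\phi(w_0)=1$, $0 \le \phi \le 1$, and $\phi \equiv 0$ off $V$, and approximate $\phi$ uniformly by a polynomial $R(w)$ using the hypothesis. Then $Q(z)R(F(z))$ is a polynomial on $\C^n$, and comparing $|Q(p)R(w_0)|$ with $\sup_K|Q\cdot R\circ F|$ yields $|Q(p)|\le \sup_{F^{-1}(w_0)}|Q|$ after letting $\varepsilon \to 0$ and $V$ shrink to $\{w_0\}$.

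For (ii), I would argue by a partition-of-unity construction. Given $f \in \mathcal C(K)$ and $\varepsilon > 0$, for each $w \in F(K)$ the fiber hypothesis produces a polynomial $P_w(z)$ with $|f - P_w| < \varepsilon$ on $F^{-1}(w)$; by continuity this persists with error $2\varepsilon$ on $F^{-1}(U_w) \cap K$ for some open neighborhood $U_w \subset F(K)$ of $w$. Compactness of $F(K)$ yields a finite subcover $U_{w_1},\ldots,U_{w_N}$ with a continuous partition of unity $\{\phi_j\}$ on $F(K)$ subordinate to it. Using the hypothesis again, approximate each $\phi_j$ on $F(K)$ by a polynomial $R_j(w)$ and assemble
\[
P(z) = \sum_{j=1}^N R_j(F(z))\,P_{w_j}(z),
\]
which is a polynomial on $\C^n$. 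Writing
\[
f - P = \sum_j \phi_j(F)\,(f-P_{w_j}) + \sum_j (\phi_j - R_j)(F)\,P_{w_j}
\]
on $K$ bounds the first sum by $2\varepsilon$ and the second by $\sum_j \|\phi_j - R_j\|_{F(K)} \cdot \|P_{w_j}\|_K$.

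The main subtlety, and the step that must be taken in the right order, is the second error term in (ii): the polynomials $P_{w_j}$ are produced by the fiber hypothesis with no \emph{a priori} bound on $\|P_{w_j}\|_K$, so the approximations $R_j$ of $\phi_j$ must be chosen \emph{after} the $P_{w_j}$, with tolerance scaled by $\max_j \|P_{w_j}\|_K$. Once this ordering is respected, the estimate closes and $f$ is uniformly approximable, giving $[z_1,\ldots,z_n]_K = \mathcal{C}(K)$. A parallel care is needed in (i): the cutoff polynomial $R$ must approximate $\phi$ well enough off $V$ that the contribution $\sup_{K\setminus F^{-1}(V)}|Q|\cdot\sup_{F(K)\setminus V}|R|$ stays small, which is again arranged by choosing $R$ after $Q$, $V$, and $\varepsilon$ are fixed. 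I expect no deeper obstacle beyond this book-keeping of quantifier order.
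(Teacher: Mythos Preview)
The paper does not supply its own proof of this proposition: it is stated as a version of \cite[Proposition~4.7]{SW} and used thereafter as a black box. Your argument is a correct, self-contained proof along the standard lines for such fiber decompositions. In~(i) the cutoff trick is exactly what is needed, and your remark about choosing $R$ only after $Q$, $V$, and $\varepsilon$ are fixed handles the one nontrivial point. In~(ii) the partition-of-unity assembly is likewise standard, and you have correctly isolated the only delicate step: the approximants $R_j$ of the $\phi_j$ must be chosen after the fiber polynomials $P_{w_j}$ so that the tolerance can be scaled against $\max_j\|P_{w_j}\|_K$. With that ordering both error terms are controlled and the argument closes. There is nothing further to compare, since the paper defers the proof to the cited reference; your direct approach is what one would expect such a proof to look like.
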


Recall that we denote by $\Gamma$ the distinguished boundary of the bidisk.  

\begin{lemma}\label{hullcor}
Let $K\subset \Gamma$ be a closed subset such that for some $c\in S^1$ the set 
$K$ is disjoint from the circle $\{\, (z_1,z_2)\in \Gamma: z_1=c\,\}$, and there is no $a\in S^1$ such that $K$ contains the full circle $\{\, (z_1,z_2)\in \Gamma: z_1=a\,\}$.  Then $[z_1,z_2]_K=\mathcal{C}(K)$, and in particular, $K$ is polynomially  convex.
\end{lemma}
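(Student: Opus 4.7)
The plan is to apply Proposition~\ref{proppen} with the polynomial map $F\colon K\to\C$ defined by $F(z_1,z_2)=z_1$. The image $F(K)$ is a compact subset of $S^1$, and by the disjointness hypothesis $c\notin F(K)$, so $F(K)$ is a proper closed subset of $S^1$. I would first verify the hypothesis $[w]_{F(K)}=\mathcal{C}(F(K))$ of the proposition via Mergelyan's theorem: $F(K)$ has empty interior in $\C$, and $\C\setminus F(K)$ is connected because any two points can be connected by a path that, if necessary, crosses $S^1$ through one of the open arcs of $S^1\setminus F(K)$ (for instance, the arc containing $c$), which link the interior and exterior of the unit disk.

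Next I would analyze the fibers of $F$. For $w\in F(K)$ the fiber is $F^{-1}(w)=\{w\}\times K_w$, where
\[
K_w:=\{\, z_2\in S^1: (w,z_2)\in K\,\}.
\]
By the second hypothesis on $K$, $K_w$ is a proper closed subset of $S^1$, so by the same application of Mergelyan, $[z_2]_{K_w}=\mathcal{C}(K_w)$. Since $z_1\equiv w$ is constant on $F^{-1}(w)$, the algebra $[z_1,z_2]_{F^{-1}(w)}$ is identified under the projection to $z_2$ with $[z_2]_{K_w}$, and hence equals $\mathcal{C}(F^{-1}(w))$. For $w\notin F(K)$ the condition is vacuous.

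Proposition~\ref{proppen}(ii) then gives $[z_1,z_2]_K=\mathcal{C}(K)$. Polynomial convexity of $K$ is automatic from this conclusion: if the restrictions of polynomials in the coordinates are uniformly dense in $\mathcal{C}(K)$, then the maximal ideal space of the algebra is $K$, which is precisely $\widehat{K}=K$.

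There is no real obstacle here; the key observation is simply to choose the auxiliary map $F$ to be projection onto the first coordinate, which reduces the two-variable approximation problem on $K\subset\Gamma$ to two one-variable approximation problems on proper closed subsets of $S^1$, each handled by Mergelyan's theorem.
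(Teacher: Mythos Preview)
Your proof is correct and follows essentially the same approach as the paper: project onto the first coordinate via $F(z_1,z_2)=z_1$, observe that both $F(K)$ and each fiber are proper closed subsets of a circle, and apply Proposition~\ref{proppen}(ii). The only cosmetic difference is that you invoke Mergelyan's theorem for the one-variable fact $[z]_L=\mathcal{C}(L)$ on a proper closed $L\subset S^1$, whereas the paper simply cites this as a known result.
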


\begin{proof}
First note that if $L$ is a proper closed subset of $S^1\subset \C$, then $[z]_L=\mathcal{C}(L)$.  (See for instance \cite[pp.~81--82]{Hoffman}.)
Let $F$ be the restriction to $K$ of the projection $(z_1,z_2)\mapsto z_1$. The assumptions on $K$
imply that $F(K)$ is a proper closed subset of $S^1$ and that $F^{-1}(a)$ is a proper closed subset
of $\{a\}\times S^1$ for each $a\in \C$. The hypotheses of Proposition~\ref{proppen} (ii) are thus satisfied and the result follows. 
\end{proof}

\begin{lemma}\label{function}
If $K$ is a closed subset of a smooth manifold $M$ such that $M \setminus K$ is disconnected and $U$ and $V$ form a separation of $M \setminus K$, then there is a  smooth function $f$ on $M$ that is identically zero on $K$, strictly positive everywhere on $U$, and strictly negative everywhere on $V$.
\end{lemma}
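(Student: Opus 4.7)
The plan is to reduce the lemma to the classical fact, due to Whitney, that every closed subset of a smooth (second-countable) manifold arises as the zero set of some nonnegative smooth function. The construction then just subtracts two such functions.

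First I would observe that the separation data yield two open subsets of $M$: since $U$ and $V$ are open in $M\setminus K$ and $M\setminus K$ is open in $M$, both $U$ and $V$ are open in $M$. Consequently $M\setminus U = K\cup V$ and $M\setminus V = K\cup U$ are both closed in $M$.

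Next I would invoke Whitney's theorem (see e.g.\ Hirsch, \emph{Differential Topology}, Ch.~2) to obtain nonnegative smooth functions $g_U,g_V\in\mathcal{C}^\infty(M)$ with $g_U^{-1}(0) = M\setminus U$ and $g_V^{-1}(0) = M\setminus V$. Then set $f := g_U - g_V$. The three required properties fall out immediately: on any point $p\in U$ we have $g_U(p)>0$ (since $p\notin M\setminus U$) while $g_V(p)=0$ (since $U\subset M\setminus V = g_V^{-1}(0)$), so $f(p)>0$; symmetrically $f<0$ on $V$; and on $K\subset (M\setminus U)\cap(M\setminus V)$ both $g_U$ and $g_V$ vanish, giving $f\equiv 0$ on $K$.

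There is essentially no obstacle here beyond quoting Whitney's theorem; the only thing to check is that the hypotheses let us treat $U$ and $V$ as bona fide open subsets of $M$ (verified above) so that their complements are the closed sets to which Whitney applies. One could alternatively build $g_U$ and $g_V$ by hand via a partition of unity subordinate to a locally finite cover of $U$ (resp.\ $V$) by coordinate balls, assigning each patch a bump whose distance-to-boundary factor forces vanishing on the complement, but invoking the known result is cleaner.
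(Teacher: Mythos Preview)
Your proof is correct. Both your argument and the paper's rest on Whitney's theorem that every closed subset of a smooth manifold is the zero set of a nonnegative smooth function, but the constructions differ. The paper applies Whitney once, to $K$ itself, obtaining a nonnegative $g$ with $g^{-1}(0)=K$; it then precomposes with a smooth $\gamma:\R\to\R$ that is identically zero on $(-\infty,0]$ and positive on $(0,\infty)$, so that $\gamma\circ g$ vanishes to infinite order on $K$, and defines $f$ to be $\gamma\circ g$ on $K\cup U$ and $-\gamma\circ g$ on $V$, the flatness along $K$ guaranteeing smoothness across the sign change. You instead apply Whitney twice, to the closed sets $K\cup V=M\setminus U$ and $K\cup U=M\setminus V$, and take $f=g_U-g_V$; smoothness is then automatic with no flattening step needed. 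Your route is a bit slicker; the paper's has the incidental feature that the resulting $f$ vanishes to infinite order on $K$, though this is not used anywhere.
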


\begin{proof}
Every closed subset of a smooth manifold is the zero set of some smooth real-valued function on the manifold (see \cite[pp.\ 76--77]{Spivak}),
so let $g$ be a smooth real-valued function on $M$ with $K$ its zero set. By replacing $g$ by $g^2$, 
we may assume that $g\geq 0$. Let $\gamma \colon \R \to \R$ be a smooth function that is identically zero for $x\leq 0$
and strictly positive for $x >0$. Then the function $\gamma \circ g:M\rightarrow \R$ is smooth and has $K$ as its zero set, and it is easily verified that
all partial derivatives of  
$\gamma\circ g$ vanish identically on $K$. It follows that the function $f$ defined by setting $f$ 
equal to $\gamma\circ g$ on $K\cup U$ and equal to $-\gamma\circ g$ on $V$ has the required properties.
\end{proof}

The key to our proof of Theorem~\ref{graphtorus} is the result of Herbert Alexander 
\cite{Alexander} that asserts the existence of a closed set $E\subset \Gamma$ such that 
$\hat{E}\setminus E$ is nonempty but contains no analytic disk.
However, we need to show that the set can be taken to have a certain additional property.  This is achieved in the following lemma.

\begin{lemma}\label{lma1}
There is a closed subset $X$ of the torus $\Gamma$ such that 
\item{\rm(i)} $\hat{X}\setminus X$ is nontrivial but contains no
analytic disk, and 
\item{\rm(ii)} $\Gamma\setminus X$ is the disjoint union of open sets $U$ and $V$ each
of which is disjoint from some circle $\{(z_1,z_2)\in \Gamma: z_1=c\}$ (depending on U and V).
\end{lemma}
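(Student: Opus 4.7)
The plan is to take the closed set $E\subset\Gamma$ furnished by Alexander's theorem (so $\hat E\setminus E$ is nonempty and disk-free) and enlarge it with suitable closed subsets of two meridian circles to force the required separation of the complement, checking that the enlargement does not damage condition (i).

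First, I would observe using Lemma~\ref{hullcor} that Alexander's set $E$ must project onto all of $S^1$ under $\pi\colon (z_1,z_2)\mapsto z_1$: since $E$ cannot contain any full meridian $\{z_1=a\}$ (else $\hat E$ would contain the analytic disk $\{z_1=a,\,|z_2|\le1\}$), Lemma~\ref{hullcor} would force $\hat E=E$ whenever $\pi(E)\ne S^1$, contradicting $\hat E\setminus E\ne\emptyset$. Next, I would fix two distinct points $c_1,c_2\in S^1$, denote the meridians $M_j=\{z_1=c_j\}$, and form $X:=E\cup A$ where $A=A_1\cup A_2$ and each $A_j\subset M_j$ is a proper closed subset designed so that $A_j\cup(E\cap M_j)$ is still a proper closed subset of the circle $M_j$.

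Condition (i) would then be verified via the slicing principle for polynomial hulls on a complex hyperplane $H=\{z_1=c\}$: $\hat X\cap H=\widehat{X\cap H}$, with the hull computed inside $H\cong\C$. For $c\ne c_1,c_2$ the slice is $E\cap H$, whose hull is unchanged from $\hat E\cap H$ and so contains no analytic disks (by Alexander's property). For $c=c_j$ the slice is a proper closed subset of the circle $M_j$ and is therefore polynomially convex inside $H$. Combining the slices gives $\hat X\setminus X=\hat E\setminus E$, so (i) is inherited from Alexander's theorem.

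The hard part will be condition (ii). A naive choice of $A_j$ (e.g.\ taking $M_j$ minus a small open arc) typically fails to disconnect $\Gamma\setminus X$, because at any point of $M_j\setminus X$ the one-dimensionality of $E$ leaves a neighborhood in $\Gamma\setminus E$, which provides a bridge between the two open bands of $\Gamma$ cut out by $M_1\cup M_2$. To overcome this I would either (a) choose $A$ to include additional closed arcs lying in $M_1\cup M_2$ that are coordinated with $E$ so as to block every such bridge, or (b) appeal to extra freedom in Alexander's construction (approximating $E$ by an $E'$ whose intersections with $M_j$ are dense enough in a prescribed subarc) so that $A_j$ can be chosen to make $X\cap M_j$ close $M_j$ everywhere except on one open subarc of $M_j$, the complement of which meets $E$ in a set dense enough to kill every bridge. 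Once the resulting $X$ disconnects $\Gamma\setminus X$ into the two open bands (plus residual open pieces of each $M_j$ that attach to only one side), the two bands can be grouped as $U$ and $V$, each of which is disjoint from exactly one of the meridians $M_1$ or $M_2$, completing (ii).
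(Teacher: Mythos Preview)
Your proof contains a genuine error in the verification of (i). The ``slicing principle'' you invoke, $\hat X \cap \{z_1=c\} = \widehat{X\cap\{z_1=c\}}$, is false in general: only the inclusion $\supset$ always holds. (For a trivial counterexample, let $X$ be the unit circle in the $z_1$-axis; then $\hat X\cap\{z_1=0\}=\{0\}$ while $X\cap\{z_1=0\}=\emptyset$.) The fibered-hull formula $\hat X=\bigcup_w \widehat{F^{-1}(w)}$ of Proposition~\ref{proppen} would give what you want, but it requires $[w]_{F(X)}=\cC(F(X))$, and this fails for $F=\pi$ precisely because you yourself showed $\pi(X)=S^1$ and $[z]_{S^1}\ne\cC(S^1)$. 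The paper handles (i) instead via Lemma~\ref{lma2}, which says that adjoining to any compact $K$ a closed subset $C$ of a single circle $\{z_1=c\}$ (or $\{z_2=c\}$), provided $K\cup C$ does not contain that full circle, enlarges the hull only by $C$ itself. Applying this once for each arc in the added set gives $\hat X\setminus X=\hat E\setminus E$.

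For (ii) your sketch is too optimistic. If you only add arcs lying on two meridians $M_1,M_2$, then the open subarcs $M_j\setminus X$ (which must be nonempty) will connect the two ``bands'' of $\Gamma\setminus(M_1\cup M_2)$ unless $E$ happens to block those bridges---and you have no control over where $E$ sits along $M_j$. Your option (b) would require reopening Alexander's construction, and option (a) is not spelled out. The paper sidesteps this by using only the fact that $E$ is a \emph{proper} closed subset of $\Gamma$: it finds an open rectangle $R\subset\Gamma$ disjoint from $E$, and builds two disjoint simple closed curves $\Sigma,\Sigma'$, each homotopic to a meridian but assembled from \emph{four} arcs---two vertical (on circles $\{z_1=\mathrm{const}\}$) and two horizontal (on circles $\{z_2=\mathrm{const}\}$)---with the short arcs lying inside $R$. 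Each $\Sigma,\Sigma'$ is a closed curve on $\Gamma$ yet contains no full circle of either type, so Lemma~\ref{lma2} applies; and together they disconnect $\Gamma$ into two pieces, each missing a meridian. This works for any Alexander set $E$ whatsoever.
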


We postpone the proof of this lemma and first use it to prove Theorem~\ref{graphtorus}.

\begin{proof}[Proof of Theorem~\ref{graphtorus}]

Consider the graph $K:=\mathcal{G}_f(\Gamma)\subset \C^3$ and let
$F$ be the restriction to $K$ of the projection
$(z_1,z_2,z_3) \mapsto z_3$. 
Then $F(K)$ is a compact subset of $\R$ so, by Weierstrass' approximation
theorem, the hypotheses of Proposition~\ref{proppen} (i) are fulfilled and we can determine the hull of  
$K$ by determining the hulls of the fibers $F^{-1}(r)=\{f=r\}\times \{r\}$. 
To do this, first notice that $X$ has to intersect every circle
$\{(z_1,z_2)\in \Gamma : z_1=c\}$. Indeed, $X$ cannot contain such a full circle since $\hat{X}\setminus X$ contains no analytic disk
and so if $X$ were disjoint from some such circle, then Corollary~\ref{hullcor} would imply that $X$ were polynomially convex.
It follows that $\{f=r\}$, $r\neq 0$, contains no full circle $\{(z_1,z_2)\in \Gamma : z_1=c\}$. Furthermore,
since $\{f=r\}$ is contained in either $U$ or $V$ for $r\neq 0$, $\{f=r\}$ is disjoint from some such circle.
Hence, for $r\neq 0$, the fiber $F^{-1}(r)=\{f=r\}\times \{r\}$ is polynomially convex by Corollary~\ref{hullcor}.
The hull of $F^{-1}(0) = X\times \{0\}$ is $\hat{X}\times \{0\}$ and from Proposition~\ref{proppen} we get that
\begin{equation*}
\widehat{\mathcal{G}_f(\Gamma)} \setminus \mathcal{G}_f(\Gamma) =
(\hat{X}\setminus X) \times \{0\},
\end{equation*}
which is nonempty but contains no analytic disk.
\end{proof}

The proof of Lemma~\ref{lma1} uses the following lemma.

\begin{lemma}\label{lma2}
If $K \subset \Gamma$ and $C\subset \{(z_1,z_2)\in \Gamma:\, z_1=c\}$ are closed sets such that 
$K\cup C$ does not contain the full circle $\{(z_1,z_2)\in \Gamma:\, z_1=c\}$, then  
$(K\cup C)^{\widehat{}}=\widehat{K}\cup C$. The same holds with $z_1$ replaced by $z_2$.
\end{lemma}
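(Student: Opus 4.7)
My plan is to prove the nontrivial inclusion $\widehat{K\cup C}\subseteq \widehat{K}\cup C$ pointwise, splitting on whether the first coordinate of $p$ equals $c$ and producing in each case a measure whose support is forced into the correct set; the reverse inclusion is immediate from $K,C\subseteq K\cup C$.

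The starting observation is that $S:=(K\cap\{z_1=c\})\cup C$ is polynomially convex in $\C^2$. By hypothesis $S$ is a proper closed subset of the circle $\{(z_1,z_2)\in\Gamma:z_1=c\}$; any point of $\widehat S$ satisfies $|z_1-c|\le \|z_1-c\|_S=0$, so $\widehat S$ lies in the line $\{z_1=c\}$, where the hull coincides with the polynomial hull in $\C$ of a proper closed subset of $b\D$, which is the set itself.

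Fix $p=(p_1,p_2)\in \widehat{K\cup C}$. If $p_1\neq c$, I would invoke Bishop's theorem (see \cite[Ch.~II]{Gamelin}) to produce a Jensen measure for $p$ on $K\cup C$: a probability measure $\mu$ satisfying $\log|f(p)|\le \int\log|f|\,d\mu$ for every polynomial $f$. Applied to $f=z_1-c$, which vanishes identically on $C$, any positive $\mu$-mass on $C$ would force $\log|p_1-c|\le -\infty$, contradicting $p_1\neq c$. Hence $\mu$ is supported on $K$, and applying the Jensen inequality to an arbitrary polynomial $g$ together with the concavity estimate $\int\log|g|\,d\mu\le\log\int|g|\,d\mu\le\log\|g\|_K$ yields $|g(p)|\le\|g\|_K$, so $p\in\widehat{K}$.

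If instead $p_1=c$, I would take a representing measure $\nu$ on $K\cup C$ for $p$ (obtained by Hahn--Banach and Riesz). From $\int z_1\,d\nu=c$, $|c|=1$, and $|z_1|\equiv 1$ on $K\cup C\subseteq\Gamma$, the equality case of the triangle inequality forces $z_1=c$ $\nu$-almost everywhere, so $\mathrm{supp}(\nu)\subseteq S$; polynomial convexity of $S$ then gives $p\in\widehat{\mathrm{supp}(\nu)}\subseteq \widehat S=S\subseteq \widehat K\cup C$. The variant with $z_1$ replaced by $z_2$ is identical after exchanging coordinates. The main subtle point is the first case: a purely representing-measure argument there only delivers the weak estimate $|p_1-c|\le \int|z_1-c|\,d\mu$, not enough to eliminate the $C$-part of the measure; it is the $-\infty$ singularity of $\log|z_1-c|$ along $C$, accessed through a Jensen measure, that produces the clean contradiction.
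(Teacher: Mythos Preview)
Your proof is correct and proceeds along a genuinely different line from the paper's. Both arguments split on whether the first coordinate of the point equals $c$, but the paper works by explicitly constructing separating polynomials, whereas you argue via measures. In the case $p_1\neq c$ the paper takes a point $a\notin\widehat K$ and multiplies a polynomial witnessing this by a suitable power of $z_1-c$ to kill the contribution from $C$; you instead pull a Jensen measure for $p$ and use the $-\infty$ singularity of $\log|z_1-c|$ on $C$ to force the measure off $\{z_1=c\}$. In the case $p_1=c$ the paper first reduces to $|a_2|<1$ using that points of $\Gamma$ are peak points and then builds a polynomial of the form $\bigl((1+\bar c z_1)/2\bigr)^n q(z_2)$ tailored to a small rectangle in $\Gamma$ missing $K\cup C$; your representing-measure argument with the equality case of $\bigl|\int z_1\,d\nu\bigr|=1$ is cleaner here, localizing the measure to the circle $\{z_1=c\}$ in one stroke and then invoking the polynomial convexity of the proper arc $S$. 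The paper's route is more elementary and self-contained (no Jensen-measure machinery), while yours is more conceptual and handles the second case without any explicit polynomial construction or peak-point reduction.
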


\begin{proof}
Clearly $\overline{\mathbb{D}}^2\supset (K\cup C)^{\widehat{}} \supset \widehat{K}\cup C$ so we are to show that 
if $a=(a_1,a_2) \in \overline{\mathbb{D}}^2 \setminus (\widehat{K}\cup C)$, then $a$ is not in $(K\cup C)^{\widehat{}}$. 
Assume first that $a_1\neq c$ and let $q(z)=z_1-c$; then $q=0$ on $C$ and $q(a)\neq 0$. 
Since $a$ is not in $\hat{K}$, there is a polynomial $p$ such that $p(a)=1$ and $\sup_{z\in K} |p(z)|<1$.
Some power $p^n$ of $p$ satisfies $\sup_{z\in K} |p^n(z)|< |q(a)|/(1+\sup_{z\in K}|q(z)|)$ and it follows that
$q\cdot p^n$ separates $a$ from $K\cup C$.

Assume now that $a_1=c$. Since each point of $\Gamma$ is a peak point for the bidisk algebra, we may assume that 
$|a_2|<1$. Choose $\theta$ such that $e^{i\theta}=c$ and choose $\alpha_1,\beta_1,\alpha_2,\beta_2$ with 
$\alpha_1<\theta<\beta_1$ such that the ``rectangle'' $\{(e^{is}, e^{it}):\, s\in [\alpha_1,\beta_1], t\in [\alpha_2,\beta_2]\}$
is disjoint from $K\cup C$; this is possible since $K\cup C$ is closed and does not contain the full circle
$\{(z_1,z_2)\in \Gamma:\, z_1=c\}$.
Since the set $J=S^1\setminus \{e^{it}:\, t\in [\alpha_2,\beta_2]\}$ is polynomially convex, there is a 
polynomial $q$ of one variable such that $q(a_2)=1$ and $\sup_{w\in J} |q(w)| <1$. Let $M$ be the supremum of 
$|q|$ over the unit disk. For sufficiently large $n$, the supremum of $|(1+\bar{c}z)/2|^n$ over 
$S^1\setminus \{e^{is}:\, s\in [\alpha_1,\beta_1]\}$ is strictly less than $1/M$ and so
\begin{equation*}
\sup_{(z_1,z_2)\in K\cup C} \left|\left(\frac{1+\bar{c}z_1}{2}\right)^n q(z_2) \right| < 1 =
\left(\frac{1+\bar{c}z_1}{2}\right)^n q(z_2)\big|_{(z_1,z_2)=(a_1,a_2)}.
\end{equation*} 

Obviously the roles of $z_1$ and $z_2$ can be reversed.
\end{proof}

\begin{proof}[Proof of Lemma~\ref{lma1}]
By \cite{Alexander} there exists a closed set $E\subset \Gamma$ such that 
$\hat{E}\setminus E$ is nonempty but contains no analytic disk. We will obtain
the set $X$ by taking the union of $E$ with certain circle segments.

Since $E$ is a proper closed subset of $\Gamma$ there exist $\alpha_1, \beta_1, \alpha_2,\beta_2$
with $-\pi < \alpha_1 < \beta_1 < \pi$ and $-\pi < \alpha_2 < \beta_2 < \pi$
such that the ``rectangle'' $\{(e^{is}, e^{it}): s\in [\alpha_1,\beta_1], t\in [\alpha_2,\beta_2]\}$ is disjoint from $E$.
Choose $\varphi_1,\psi_1, \varphi_2, \psi_2$ and $\varphi'_1,\psi'_1, \varphi'_2, \psi'_2$ such
that $\alpha_1 < \varphi_1<\psi_1<\varphi'_1<\psi'_1<\beta_1$ and
$\alpha_2 < \varphi_2=\varphi'_2<\psi_2=\psi'_2<\beta_2$. Let 
\begin{eqnarray*}
\Sigma &=& \big\{(e^{i\varphi_1}, e^{it}):\, t\in [-\pi,\pi]\setminus [\varphi_2,\psi_2] \big\}\cup 
\big\{(e^{i\psi_1}, e^{it}):\, t\in [\varphi_2,\psi_2] \big\} \\
& & \cup \big\{(e^{is}, e^{i\varphi_2}):\, s\in [\varphi_1,\psi_1] \big\} \cup 
\big\{(e^{is}, e^{i\psi_2}):\, s\in [\varphi_1,\psi_1] \big\},
\end{eqnarray*}
let $\Sigma'$ be given by the same expression but with primes throughout, and let $X=E\cup \Sigma\cup\Sigma'$.
It is easily seen that $\Gamma\setminus X$ is disconnected and can be written as the disjoint union of open sets 
$U$ and $V$ each of which is disjoint from some circle $\{(z_1,z_2)\in \Gamma: z_1=c\}$. 
By repeated use of Lemma~\ref{lma2} it follows that $\hat{X}\setminus X = \hat{E}\setminus E$ and so
$\hat{X}\setminus X$ is nonempty but contains no analytic disk. 
\end{proof}

\bigskip

The proof of Theorem~\ref{graphball} uses the following lemma.

\begin{lemma}\label{nonconstant}
Let $X\subset b\B_2$ be a compact set that is disjoint from the circle $\{\, (z_1,z_2)\in b\B_2: |z_1|=1,\  z_2=0\,\}$ and contains no full circle $\{\, (z_1,z_2)\in b\B_2: z_1=c\,\}$ for $c$ a constant with $|c|<1$.   Then there is a smooth real-valued function $g$ on $b\B_2$ with $X$ as its zero set and that is nonconstant on each circle $\{\, (z_1,z_2)\in b\B_2: z_1=c\,\}$ for $|c|<1$.
\end{lemma}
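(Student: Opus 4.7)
The plan is to produce $g$ as a multiplicative perturbation of an arbitrary smooth nonnegative defining function for $X$, using a three-parameter family of perturbations and Sard's theorem to rule out the bad parameter values for which the perturbed function would be constant on some interior circle.

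First, by a standard fact (cf.\ the proof of Lemma~\ref{function}) fix any smooth function $f_0\colon b\B_2\to[0,\infty)$ with $f_0^{-1}(0)=X$. For $(\alpha,\beta,\gamma)\in\R^3$ set
\[
g_{\alpha,\beta,\gamma}(z_1,z_2) := f_0(z_1,z_2)\,\exp\bigl(\alpha\operatorname{Re} z_2+\beta\operatorname{Im} z_2+\gamma\operatorname{Re}(z_2^{\,2})\bigr).
\]
Since the exponential factor is strictly positive, $g_{\alpha,\beta,\gamma}$ is smooth, nonnegative, and has zero set exactly $X$ for every choice of parameters. Write $C_c:=\{(z_1,z_2)\in b\B_2: z_1=c\}$ and $V:=\{c\in\D : C_c\cap X=\emptyset\}$ (an open subset of $\D$); it remains to choose $(\alpha,\beta,\gamma)$ so that $g_{\alpha,\beta,\gamma}$ is nonconstant on every $C_c$, $|c|<1$.

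For $c\in\D\setminus V$ one has $C_c\cap X\ne\emptyset$ while, by hypothesis, $C_c\not\subset X$, so $g_{\alpha,\beta,\gamma}$ attains both the value $0$ and strictly positive values on $C_c$ and is automatically nonconstant there, independent of the parameters. For $c\in V$, set $r=\sqrt{1-|c|^2}>0$ and parametrize $C_c$ by $\theta\mapsto(c,re^{i\theta})$. Since $f_0>0$ on $C_c$ we may take logarithms, and $g_{\alpha,\beta,\gamma}|_{C_c}$ is constantly equal to some $k>0$ if and only if
\[
\log f_0(c,re^{i\theta}) \,=\, \log k - \alpha r\cos\theta - \beta r\sin\theta - \gamma r^2\cos 2\theta
\]
for every $\theta$. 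Comparing the $\cos\theta$, $\sin\theta$, and $\cos 2\theta$ Fourier coefficients of the two sides, this forces $(\alpha,\beta,\gamma)=\Phi(c):=\bigl(-a_1(c)/r,\,-b_1(c)/r,\,-a_2(c)/r^2\bigr)$, where $a_1(c), b_1(c), a_2(c)$ denote the corresponding Fourier coefficients of $\log f_0|_{C_c}(\theta)$. Hence the set of parameter values for which $g_{\alpha,\beta,\gamma}|_{C_c}$ could be constant for some $c\in V$ is contained in $\Phi(V)\subset\R^3$.

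The map $\Phi\colon V\to\R^3$ is smooth, since $\log f_0$ is smooth on the open set $\{f_0>0\}$, which contains $\bigcup_{c\in V}C_c$, and the Fourier integrals therefore depend smoothly on $c$. As $V$ is $2$-real-dimensional and $\R^3$ is $3$-dimensional, Sard's theorem implies that $\Phi(V)$ has Lebesgue measure zero in $\R^3$. Choosing any $(\alpha,\beta,\gamma)\in\R^3\setminus\Phi(V)$, the function $g:=g_{\alpha,\beta,\gamma}$ fulfills the conclusion of the lemma. The main obstacle is precisely the uncountability of the family of interior circles $C_c\subset b\B_2\setminus X$: with fewer than three perturbation parameters, the resulting bad parameter set could fill the parameter space; the three-parameter family together with the dimension count underlying Sard's theorem is what rescues the construction.
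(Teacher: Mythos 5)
Your proof is correct, but it takes a genuinely different route from the paper's. The paper constructs $g$ additively and explicitly: starting from a smooth nonnegative defining function $\rho$ for $X$ and a bump function $\chi_1$ supported near $X'=\pi(X)\Subset\D$, it sets $g=\rho\chi_1+\psi_k\chi_2$ with $\psi_k(z_2)=1+e^{-1/|z_2|}\sin(k\arg z_2)$, and shows for $k$ large that the oscillation of $\psi_k$ forces nonconstancy on circles away from $X'$ and in the transition region, while on circles over $X'$ the function $\rho$ already has oscillation bounded below by a uniform $\delta$. That approach needs the explicit estimates \eqref{eq1}--\eqref{eq4} together with Lemma~\ref{calculus} to verify smoothness of $\psi_k$ across $z_2=0$. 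You instead use a multiplicative three-parameter family $f_0\exp(\alpha\operatorname{Re}z_2+\beta\operatorname{Im}z_2+\gamma\operatorname{Re}z_2^2)$ and a soft dimension count: constancy on a circle $C_c$ not meeting $X$ would pin down $(\alpha,\beta,\gamma)=\Phi(c)$ via the first three Fourier coefficients of $\log f_0|_{C_c}$, and since $\Phi$ is a smooth map from the $2$-dimensional open set $V$ into $\R^3$, Sard's theorem (in the trivial case where every point is critical) gives that $\Phi(V)$ is Lebesgue-null, so a generic parameter triple works. This sidesteps the transition-region estimates and the smoothness issue at $z_2=0$ (your perturbation is visibly smooth), at the cost of being nonconstructive in the choice of parameters; the three-parameter family is exactly what is needed, since with two parameters the bad set could a priori be all of parameter space. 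Worth noting: you are right that the circles $C_c$ with $C_c\cap X\neq\emptyset$ are handled automatically since $X$ contains no full such circle, and the hypothesis that $X$ misses $\{|z_1|=1,\ z_2=0\}$ is used in the paper to keep the transition region compactly inside $\D$, whereas your argument does not seem to need it beyond giving that $V$ is a nonempty open subset of $\D$.
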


We postpone the proof of this lemma and first use it to prove Theorem~\ref{graphball}.

\begin{proof}[Proof of Theorem \ref{graphball}] 
By the theorem of Duval and Levenberg~\cite{DL} quoted in the introduction as Theorem~\ref{duval} there is a compact set $X\subset b\B_2$ that is not polynomially convex but whose polynomially convex hull contains no analytic disk.  This example is also presented in \cite{Stout}.  Examination of the proof as presented in \cite{Stout} reveals that one can easily arrange to have $X$ be disjoint from the circle $\{\, (z_1,z_2)\in b\B_2: |z_1|=1,\  z_2=0\,\}$.  (In Theorem~1.7.2 take the set $K$ to be disjoint from the set $\{\, z_2=0\,\}$ and in the proof take $z_2$ to be among the polynomials $Q_j$.)

Let $g$ be the function given by Lemma~\ref{nonconstant}, and set $f_1=g$ and $f_2=x_1\cdot g$.  Consider the graph $K:=\mathcal{G}_f(b\B_2)\subset \C^4$ and let
$F$ be the restriction to $K$ of the projection
$(z_1,\ldots,z_4) \mapsto (z_3, z_4)$. 
Then $F(K)$ is a compact subset of $\R^2$ so, by Weierstrass' approximation
theorem, the hypotheses of Proposition~\ref{proppen} (i) are fulfilled and we can determine the hull of  
$K$ by determining the hulls of the fibers $F^{-1}(r)$.  Since 
the hull of $F^{-1}(0,0) = X\times \{(0,0)\}$ is $\hat{X}\times \{(0,0)\}$, and the set $\hat{X}\setminus X$ is nonempty but contains no analytic disk, it suffices to show that each of the other fibers is polynomially convex.

Let $E=F^{-1}(r_1,r_2)$ be some other fiber.   Note that on $E$ we have $z_3=r_1\neq 0$ and $x_1=z_4/z_3=r_2/r_1$.  In particular $E$ is contained in a level set of $x_1$.  
Let $G$ be the restriction to $E$ of the mapping $(z_1,\ldots,z_4) \mapsto (z_3, (z_1-(r_2/r_1))/i)$.  We will establish polynomial convexity of $E$ by applying Proposition~\ref{proppen} (i) again, this time $K$ replaced by $E$ and $F$ replaced by $G$.
Since $(z_1-(r_2/r_1))/i=y_1$ on $E$, the set $G(E)$ is contained in $\R^2$ so, by Weierstrass' approximation
theorem again, the hypotheses of Proposition~\ref{proppen} (i) are fulfilled.  
On each fiber $G^{-1}(w_1,w_2)$, the functions $z_1$, $z_3$, and $z_4$ are constant.  Because $g$ is nonconstant on each circle $\{\, (z_1,z_2)\in b\B_2: z_1=c\,\}$ for $|c|<1$, it follows that each fiber is either a {\it proper} subset of some circle where $z_1$, $z_3$, and $z_4$ are constant or else is a single point.  Thus each fiber  is polynomially convex and hence so is $E$.

\end{proof}

The proof of Lemma~\ref{nonconstant} uses the following calculus lemma for which the reader can easily supply a proof.

\begin{lemma}\label{calculus}
Suppose that $f$ is a smooth function on $\R^n\setminus\{0\}$ and to each partial derivative $\partial^{k_1+\cdots +k_n} \, f/\partial x_1^{k_1}\cdots \partial x_n^{k_n}$ of any order there corresponds an integer $k$ such that $\partial^{k_1+\cdots +k_n} f/\partial x_1^{k_1}\cdots \partial x_n^{k_n}$ blows up at the origin no faster than $1/r^k$.  Suppose also that $\alpha$ is a smooth function on $\R^n$ with all partial derivatives of all orders equal to zero at the origin.  Then the function $\alpha\cdot f$ is smooth on $\R^n$.  (Of course here we define $(\alpha\cdot f)(0)$ to be zero.)
\end{lemma}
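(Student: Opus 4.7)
The plan is to exploit the fact that $\alpha$ and every one of its derivatives vanish at the origin faster than any polynomial, so they beat the polynomial blow-up of the derivatives of $f$, and then to conclude via Leibniz combined with the fundamental theorem of calculus.

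First I would recast the hypothesis on $\alpha$ as a quantitative decay statement. Since every partial derivative $\partial^\beta \alpha$ is itself smooth on $\mathbb{R}^n$ and has all its own derivatives vanishing at $0$, Taylor's theorem with remainder yields, for each multi-index $\beta$ and each positive integer $N$, a constant $C(\beta,N)$ such that
\[
|\partial^\beta \alpha(x)| \leq C(\beta,N)\,|x|^N \qquad \text{for all } x \text{ in a fixed neighborhood of } 0.
\]
In words, every derivative of $\alpha$ decays faster than any polynomial at the origin.

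Next, on $\mathbb{R}^n\setminus\{0\}$ the product $\alpha f$ is smooth and Leibniz's rule gives
\[
\partial^\gamma(\alpha f)(x) = \sum_{\beta\leq \gamma}\binom{\gamma}{\beta}\,\partial^\beta\alpha(x)\,\partial^{\gamma-\beta}f(x).
\]
By hypothesis each $\partial^{\gamma-\beta}f$ is bounded by $D/|x|^k$ near $0$ for some integer $k=k(\gamma,\beta)$, while the preceding estimate with $N=k+1$ gives $|\partial^\beta \alpha(x)|\leq C|x|^{k+1}$. Each summand on the right is thus dominated by $CD|x|$, so the whole sum tends to $0$ as $x\to 0$. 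Extending $\partial^\gamma(\alpha f)$ by the value $0$ at the origin therefore produces a function $G_\gamma$ that is continuous on all of $\mathbb{R}^n$.

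Finally I would run an induction on $|\gamma|$ to conclude that $\alpha f$, extended by $0$ at the origin, is $|\gamma|$-times continuously differentiable with $\partial^\gamma(\alpha f)=G_\gamma$ everywhere. The case $|\gamma|=0$ is the continuity statement just proved. For the inductive step, suppose $\partial^{\gamma'}(\alpha f)=G_{\gamma'}$ on $\mathbb{R}^n$, fix a coordinate direction $e_i$, and set $\gamma=\gamma'+e_i$. Away from the origin the classical Leibniz computation already gives $\partial_i G_{\gamma'}=G_\gamma$, so it only remains to handle the origin. On the open segment from $0$ to $te_i$ ($t\neq 0$) the function $G_{\gamma'}$ is classically $e_i$-differentiable with derivative $G_\gamma$, and $G_\gamma$ is continuous on the closed segment. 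Applying the fundamental theorem of calculus on $[\varepsilon,t]$ and letting $\varepsilon\to 0^+$ (using continuity of $G_{\gamma'}$ at $0$) yields
\[
G_{\gamma'}(te_i)-G_{\gamma'}(0) = \int_0^t G_\gamma(se_i)\,ds.
\]
Dividing by $t$ and invoking continuity of $G_\gamma$ at $0$ gives $\partial_i G_{\gamma'}(0)=G_\gamma(0)=0$, closing the induction.

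The only step that requires any care is this last one, namely checking that the formula $G_\gamma$ really represents a partial derivative of $\alpha f$ at the origin and not only on its complement; the FTC argument is routine, but must be arranged so that segments passing through the origin are handled via a limiting procedure rather than directly.
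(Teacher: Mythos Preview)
Your argument is correct. The paper itself does not supply a proof of this lemma; it simply introduces it as ``the following calculus lemma for which the reader can easily supply a proof.'' Your proposal is exactly the kind of routine argument the authors had in mind: Taylor's theorem converts the flatness of $\alpha$ at the origin into decay estimates $|\partial^\beta\alpha(x)|\leq C|x|^N$, Leibniz's rule combines these with the polynomial blow-up bounds on the derivatives of $f$ to force every candidate partial derivative $G_\gamma$ to extend continuously by $0$ at the origin, and a short induction on $|\gamma|$ using the fundamental theorem of calculus along coordinate axes identifies these continuous extensions with the actual partial derivatives of $\alpha f$. There is nothing to compare your approach against, and no gap to report.
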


\begin{proof}[Proof of Lemma \ref{nonconstant}] 

Let $\rho$ be a smooth function on $b\mathbb{B}^2$ such that $X=\{\rho=0\}$ (which recall is possible by \cite[pp.~76--77]{Spivak}).  By replacing
$\rho$ by $\rho^2$ and rescaling, we may assume that $0\leq \rho \leq 1$. Let $\pi \colon b\mathbb{B}^2 \to \C$
be the restriction to $b\mathbb{B}^2$ of the projection $\pi(z_1,z_2)=z_1$ and let $X'=\pi(X)$. By 
the hypotheses on $X$, note that $X'$ is a compact subset of the open unit disk $\Delta \subset \C$.  For each $c\in \Delta$, denote the circle $b\mathbb{B}^2\cap \{z_1=c\}$ by $S_c$.  For functions defined on $b\mathbb{B}^2$, let \lq\lq subscript $c$" denote restriction to $S_c$, so for instance $\rho_c$ denotes the restriction of $\rho$ to $S_c$.  Notice that if $c\in X'$, then $\rho$ is non-constant on the circle $S_c$ since 
$X$ does not contain $S_c$. By compactness of $X'$ and continuity of $\rho$ there is a $\delta >0$ and 
a neighborhood $U\Subset \Delta$ of $X'$ such that 
\begin{equation}\label{eq1}
\textrm{max}\, \rho_c - \textrm{min}\, \rho_c > \delta, \quad \forall c\in U.
\end{equation}

Let $\chi_1=\chi_1(z_1)$ be a smooth function such that $\chi_1 =1$ in a neighborhood of $X'$, the support of $\chi_1$ is contained in $U$, and $0\leq \chi_1 \leq 1$; let $\chi_2=1-\chi_1$. We will consider $\chi_j$ as a function on $\C^2$ or $b\mathbb{B}^2$
that is independent of $z_2$. For $k\in \N$ let
\begin{equation*}
\psi_k(z_2)= 1+ e^{-1/|z_2|} \sin(k \cdot \textrm{arg}(z_2)).
\end{equation*} 
Notice that $\psi_k$ is smooth, by Lemma~\ref{calculus}, and that $0<\psi_k<2$. We will show, that for sufficiently large $k$, the function $g\colon b\mathbb{B}^2 \to \R$ defined by
\begin{equation*}
g:= \rho \chi_1 + \psi_k \chi_2
\end{equation*}
has the desired properties.

Clearly the zero set of $g$ equals $X$.  Given $c\in \Delta$ we must show that 
$g_c$ is non-constant. For notational convenience we write $z_2=re^{i\theta}$;
on the circle $S_c$ thus $z_2=\sqrt{1-|c|^2}\,e^{i\theta}$.
If $c\in \Delta \setminus U$ then
$g_c= 1+e^{-1/\sqrt{1-|c|^2}} \sin(k \cdot \theta)$, which is non-constant. 
Assume that $c\in U$ is such that $\chi_2(c)\geq \delta/3$. Differentiating $g_c$ with respect to
$\theta$ we get
\begin{equation}\label{eq2}
\frac{d\rho_c}{d\theta} \chi_1(c) + k e^{-1/\sqrt{1-|c|^2}} \cos (k\theta) \chi_2(c).
\end{equation}
Since $c\in U\Subset \Delta$, $\chi_2(c)\geq \delta/3$, and $d\rho_c/d\theta$ is uniformly bounded for $c\in U$, it follows that, taking $\theta=0$,
the second term in \eqref{eq2} dominates the first one for $k$ sufficiently large. Hence, $dg_c/d\theta$ cannot be identically
zero and so $g_c$ is non-constant.
Finally, assume that $c\in \{\chi_2<\delta/3\}\subset U$. Then
\begin{equation}\label{eq3}
\textrm{max}\, g_c > \textrm{max}\, \rho_c \cdot \chi_1(c) > 
\textrm{max}\, \rho_c \cdot (1-\delta/3) \geq \textrm{max}\, \rho_c - \delta/3,
\end{equation}
\begin{equation}\label{eq4}
\textrm{min}\, g_c < \textrm{min}\, \rho_c \cdot \chi_1(c)+ 2\chi_2(c) < \textrm{min}\, \rho_c + 2\delta/3.
\end{equation}
From \eqref{eq1}, \eqref{eq3}, and \eqref{eq4} it thus follow that
\begin{equation*}
\textrm{max}\, g_c - \textrm{min}\, g_c > \textrm{max}\, \rho_c - \textrm{min}\, \rho_c - \delta =0
\end{equation*}
and so $g_c$ is non-constant.

\end{proof}

The proofs of Theorems~\ref{smoothexample1} and ~\ref{smoothexample2} use the following easy lemma.
The same principle has been used by the first author to construct other counterexamples to the peak point conjecture \cite{Izzo}.

\begin{lemma}\label{generators}
Suppose $X$ is a compact Hausdorff space, $E\subset X$ is closed, $B$ is a uniform algebra on $E$, and $A=\{\, f\in \cC(X): f|E\in B\,\}$.  If $\{f_\alpha\}$ is a collection of functions in $A$ such that $\{f_\alpha|E\}$ generates $B$, and $\{g_\beta\}$ is a collection of real-valued functions that generates $\cC(X)$, and $\rho\in \cC(X)$ is a real-valued function that vanishes precisely on $E$, then the collection $\{f_\alpha\}\cup \{\rho\} \cup \{\rho g_\beta\}$ generates the algebra $A$.
\end{lemma}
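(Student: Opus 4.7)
Let $A_0$ denote the closed subalgebra of $\cC(X)$ generated by $\{f_\alpha\}\cup\{\rho\}\cup\{\rho g_\beta\}$ together with the constants. Each generator lies in $A$: the $f_\alpha$ by hypothesis, while $\rho$ and $\rho g_\beta$ vanish on $E$ and so trivially restrict to $0\in B$. Hence $A_0\subseteq A$, and the task is the reverse inclusion.

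The key step is to prove that the ideal $I:=\{\,f\in \cC(X):f|_E=0\,\}$ is contained in $A_0$. For this I would examine the closed subalgebra $C_0\subseteq A_0$ generated by the real-valued functions $\{\rho\}\cup\{\rho g_\beta\}$ and the constants. Since $\{g_\beta\}$ generates $\cC(X)$, the $g_\beta$ separate the points of $X$; combined with the fact that $\rho$ vanishes precisely on $E$, this gives that $\{\rho\}\cup\{\rho g_\beta\}$ separates any pair of points of $X$ not both lying in $E$. Moreover, because $\rho$ and each $\rho g_\beta$ vanish on $E$, every polynomial in these generators with constants is constant (equal to its constant term) on $E$. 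Since $E$ is closed in a compact Hausdorff space, $X/E$ is compact Hausdorff, and $C_0$ embeds as a self-conjugate subalgebra of $\cC(X/E)$ that contains the constants and separates points. Stone--Weierstrass then yields $C_0=\{\,f\in \cC(X):f|_E\text{ is constant}\,\}$, and in particular $I\subseteq C_0\subseteq A_0$.

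To finish, take $f\in A$. Its restriction $f|_E$ lies in $B$ and so is a uniform limit on $E$ of polynomials $p_n$ in $\{f_\alpha|_E\}$. Put $P_n:=p_n(f_\alpha)\in A_0$ and $h_n:=f-P_n\in A$; then $h_n|_E\to 0$ uniformly on $E$. By the Tietze extension theorem, extend $h_n|_E$ to a function $r_n\in \cC(X)$ with $\|r_n\|_X=\|h_n|_E\|_E\to 0$. Then $h_n-r_n$ vanishes on $E$, so $h_n-r_n\in I\subseteq A_0$, and $f=P_n+(h_n-r_n)+r_n$ exhibits $f$ as the sum of an element of $A_0$ and an error of norm tending to zero. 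Closedness of $A_0$ yields $f\in A_0$.

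The main obstacle is the containment $I\subseteq A_0$. A naive attempt to show $\rho\phi\in A_0$ for every $\phi\in \cC(X)$ by approximating $\phi$ by polynomials in the $g_\beta$ is doomed, because a product $\rho g_{\beta_1}\cdots g_{\beta_k}$ with $k\geq 2$ is not itself a polynomial in the generators $\{\rho\}\cup\{\rho g_\beta\}$: every monomial of positive degree $N$ in these generators carries at least $N$ factors of $\rho$. The clean resolution is to avoid explicit algebraic manipulation altogether and instead invoke real Stone--Weierstrass on the quotient $X/E$, as above.
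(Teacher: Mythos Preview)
Your proof is correct and follows essentially the same route as the paper: both identify that $\{\rho\}\cup\{\rho g_\beta\}$ separates points of $X/E$ and apply Stone--Weierstrass there to conclude that these generators already produce all continuous functions constant on $E$. The only difference is that the paper stops at ``It follows that $\{f_\alpha\}\cup\{\rho\}\cup\{\rho g_\beta\}$ generates $A$,'' whereas you spell out this last inference carefully via Tietze extension; your extra detail is sound but not a genuinely different argument.
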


\begin{proof}
One trivially verifies that the collection of functions $\{\rho\} \cup \{\rho g_\beta\}$ has $E$ as its common zero set and separates points off of $E$, so these functions induce real-valued functions that separate points on the quotient space $X/E$ obtained from $X$ by identifying $E$ to a point.  Consequently, the Stone-Weierstrass theorem shows that the induced functions generated $\cC(X/E)$.  Hence the collection $\{\rho\} \cup \{\rho g_\beta\}$ generates the algebra $\{\, f\in \cC(X): f\ \hbox{is constant on\ } E\}$.  It follows that the collection $\{f_\alpha\}\cup \{\rho\} \cup \{\rho g_\beta\}$ generates the algebra $A$.
\end{proof}

\begin{proof}[Proof of Theorem~\ref{smoothexample1}]
This is essentially proved in \cite{Izzo}.  Let $K$ be a compact planar set that has no interior and is contained in $\Omega$ such that $\cR(K)\neq \cC(K)$.  (See, for instance, \cite[p.~25--26]{Gamelin} for the existence of such a set.)    By \cite[Lemma~11]{Basener} there is a $C^\infty$ function $g$ such that the functions $z$ and $g$ generate $\cR(K)$.   Let $\rho$ be a $C^\infty$ function on $\oO$ that vanishes precisely on $K$.
Let $\cA=\{ \, f\in \cC(\oO): f|K\in \cR(K)\, \}$.  By Lemma~\ref{generators}, $[z,g,\rho, \rho\overline z]=[z,g,\rho, \rho x,\rho y]=\cA$.   The maximal ideal space of $\cA$ is $\oO$ by \cite[Theorem~4]{Bear}, and hence $\mathcal G_{(g,\rho,\overline z\rho)}(\oO)$ is polynomially convex.  Taking $\{f_1,f_2,f_3\}=\{g,\rho, \overline z\rho\}$ yields the theorem.
\end{proof}

\begin{proof}[Proof of Theorem~\ref{smoothexample2}]
The proof is similar to the previous one.
By \cite[Theorem~4]{Basener} (or see \cite[Example~19.8]{Stout}) there is a compact set $K\subset \C^2\subset\C^n$ such that $K$ is rationally convex and every point of $K$ is a peak point for $\cR(K)$ but $\cR(K)\neq \cC(K)$.  By translating and rescaling, we may assume that $K$ is contained in $\Omega$.   By \cite[Lemma~11]{Basener} there is a $C^\infty$ function $g$ such that the functions $\row zn, g$ generate $\cR(K)$.  Let $\rho$ be a $C^\infty$ function on $\oO$ that vanishes precisely on $K$.
Let $\cA=\{ \, f\in \cC(\oO): f|K\in \cR(K)\, \}$.  By Lemma~\ref{generators}, 
$[\row zn, g, \rho, \rho\overline{z}_1,\ldots, \rho\overline{z}_n]=[\row zn, g, \rho, \row {\rho x}n, \row {\rho y}n]=\cA$.  The maximal ideal space of $\cA$ is $\oO$ by \cite[Theorem~4]{Bear}.  Now taking $\{\row fN\}=\{\row zn, g, \rho, \rho\overline{z}_1,\ldots, \rho\overline{z}_n\}$ yields the theorem.
\end{proof}

\begin{proof}[Proof of Theorem~\ref{embedM}]
We divide the proof into steps.

{\it Step 1\/}: We show that for some $N$ there is an embedding $\Phi$ of $M$ into $\C^2\times \R^N \subset \C^{2+N}$ 
such that $\Phi(M)\cap (\C^2 \times \{0\}^N) = \Gamma\times \{0\}^N$ and $\Phi(M)\cap (\C^2 \times \{r\})$ 
contains at most one  point for each $r\in \R^N$ with $r\neq 0$; recall that $\Gamma$ is the distinguished
boundary of the bidisk in $\C^2$.

First choose a 2-torus $T$ that is a smoothly embedded submanifold of $M$, and choose complex-valued functions 
$g_1, g_2$ such that $(g_1,g_2):T\rightarrow \C^2$ gives a diffeomorphism of $T$ onto $\Gamma$.  
Extend $g_1,g_2$ to smooth functions defined on all of $M$ which we continue to denote by 
$g_1,g_2$.  We will show that for each point $p$ in $M$ there are real-valued smooth functions $\phi_1,\ldots,\phi_k$ 
for some $k$ such that the map $(g_1,g_2, \phi_1,\ldots,\phi_k):M\rightarrow \C^2\times \R^k$ is an 
immersion in a neighborhood of $p$ and $\phi_1,\ldots,\phi_k$ are identically zero on $T$.  If $p$ is not 
in $T$, we simply take real-valued functions $x_1,\ldots,x_m$ that form a local coordinate system on a 
neighborhood of $p$ and multiply them by a smooth function that is equal to 1 on a neighborhood of $p$ 
and 0 outside of a closed neighborhood of $p$ contained in the coordinate patch and disjoint from $T$.  
The resulting functions $\phi_1,\ldots,\phi_m$ then have the required properties.  If $p$ is in $T$ 
then there is a local coordinate system $x_1,\ldots, x_m$ on a neighborhood $U$ of $p$ in $M$ such that 
$U\cap T=\{x_1=0,\ldots, x_{m-2}=0\}$.  Then $(g_1,g_2,x_1,\ldots, x_{m-2}):U\rightarrow \C^2\times \R^{m-2}$ 
is an immersion in a neighborhood of $p$.  By multiplying each of $x_1,\ldots,x_{m-2}$ by a smooth function that 
is equal to 1 on a neighborhood of $p$ in $M$ and 0 on a neighborhood of $M\setminus U$, we obtain functions 
$\phi_1,\ldots,\phi_{m-2}$ with the required properties.

By the result of the preceding paragraph and a compactness argument, there exist 
real-valued smooth functions $\phi_1,\ldots,\phi_k$ for some $k$ such that 
$(g_1,g_2,\phi_1,\ldots,\phi_k): M\rightarrow \C^2\times \R\/^k$ is an immersion and 
the functions $\phi_1,\ldots, \phi_k$ are identically 0 on $T$.  Now choose finitely many real-valued 
smooth functions $f_1,\ldots, f_n$ that separate points on $M$.  Finally choose a smooth 
real-valued function $\rho$ on $M$ whose zero-set is exactly $T$.  It is straightforward to verify that the mapping 
$(g_1,g_2,\rho, \rho f_1,\ldots, \rho f_n):M\rightarrow \C^2\times \R^{n+3}$ is injective.  
Thus, the mapping 
$(g_1,g_2,\rho, \rho f_1,\ldots, \rho f_n,\phi_1,\ldots,\phi_k):M\rightarrow \C^2\times \R^{3+n+k}$ 
is an embedding with the desired properties.

{\it Step 2\/}: We show that the $N$ in Step 1 can be taken to be $2m+1$.

The proof is similar to the proof of a version of the Whitney embedding theorem given in \cite{GuilleminPollack}.  Suppose we can show that whenever $N>2m+1$, then there is a nonzero vector $a\in \R\/^N$ such that the composition of $\Phi$ with the orthogonal projection of $\CC\times\RN$ onto the orthogonal complement of $(0,a)$ in $\CC\times \RN$ is an injective immersion (and hence an embedding).  The orthogonal complement of $(0,a)$ in $\CC\times\RN$ is of the form $\CC\times V$ where $V$ is an $(N-1)$-dimensional real vector subspace of $\RN$, so we would get an embedding of $M$ into $\CC\times\R\/^{N-1}$.  Let $p:\CC\times\R\/^N\rightarrow \CC\times\R\/^{N-1}$ denote the map obtained from the orthogonal projection and an identification of $V$ with $\R\/^{N-1}$.  Since the projection is the identity on $\CC\times\{0\}^N$, we get $(p\circ \Phi)(M)\cap (\CC\times\{0\}^{N-1})\supset\Gamma\times\{0\}^{N-1}$.  If in addition $\Phi(M)\cap (\CC\times {\rm span}\{a\})=\Gamma\times\{0\}^N$, then the inclusion is an equality.  Also $(p\circ \Phi)(M)\cap (\CC\times \{r\})$ will contain at most one point for each $r\neq 0$ in $\R\/^{N-1}$ provided $\Phi(M)\cap (\CC\times (\{s\}+{\rm span}\{a\})$ contains at most one point for each $s$ in $\R\/^N\setminus{\rm span}\{a\}$.  So if we can choose $a\in \RN$ so that these conditions are satisfied, then we get an embedding of the required sort into $\CC\times\R\/^{N-1}$.  The existence of the desired embedding into $\CC\times \R\/^{2m+1}$ then follows by induction.

Let $\tphi$ be the composition of $\Phi$ with the map of $\CC\times\RN$ to $\RN$ that projects onto the second factor.  Define a map $h:M\times M\times \R\rightarrow \RN$ by $h(x,y,t)=t[\tphi(x)-\tphi(y)]$.  Also, letting $T(M)$ denote the (real) tangent bundle to $M$, define a map $g:T(M)\rightarrow \RN$ by $g(x,v)=d\tphi_x(v)$.  Since $N>2m+1$, Sard's theorem shows that there exists a point $a$ in $\RN$ belonging to neither the image of $h$ nor the image of $g$.  Note that $a\neq0$.

Let $\pi$ be the projection of $\CC\times \RN$ onto the orthogonal complement $H$ of $(0,a)$.  We want to show that $\pi\circ\Phi:M\rightarrow H$ is injective.  Suppose $(\pi\circ\Phi)(x)=(\pi\circ\Phi)(y)$.  Then $\Phi(x)-\Phi(y)=t(0,a)$ for some scalar $t$, or equivalently $\tphi(x)-\tphi(y)=ta$.  If $x\neq y$, then $t\neq 0$, because $\Phi$ is injective.  But then $h(x,y,1/t)=a$, contradicting the choice of $a$.

Next we want to show  that $\pi\circ\Phi:M\rightarrow H$ is an immersion.  Suppose $v$ is a nonzero vector in $T_x(M)$ for which $d(\pi\circ \Phi)_x(v)=0$.  By the chain rule 
$d(\pi\circ \Phi)_x=\pi\circ d\Phi_x$.  Thus $\pi\circ d\Phi_x(v)=0$,  so $d\Phi_x(v)=t(0,a)$ for some scalar $t$, and $d\tphi_x(v)=ta$.  Since $\Phi$ is an immersion, $t\neq 0$.  Thus $g(x,(1/t)v)=a$, again contradiction the choice of $a$.

Finally we need to consider $\Phi(M)\cap \bigl(\CC\times (\{s\}+{\rm span}\{a\})\bigr)$ for $s\in \RN$.  Because $h(x,y,t)=t[\tphi(x)-\tphi(y)]$ is never equal to $a$, we have that ${\tphi(x)-\tphi(y)}$ is never in ${\rm span}\{a\}$ unless it is zero.  Thus $\tphi(M)$ intersects $\{s\}+{\rm span}\{a\}$ in at most one point for each $s$.  Because $\Phi(M)\cap(\CC\times \{r\})$ contains at most one point for each nonzero $r\in\RN$, this gives that $\Phi(M)\cap \bigl(\CC\times(\{s\}+{\rm span}\{a\})\bigr)$ contains at most one point for $s$ nonzero.  We also get that $\tphi(M)$ intersects ${\rm span}\{a\}$ only in the point $0$ so that $\Phi(M)\cap (\CC\times {\rm span}\{a\})=\Phi(M)\cap(\CC\times\{0\})=\Gamma\times \{0\}$.

{\it Step 3}: We complete the proof of the theorem.

We now have an embedding $\Phi: M\rightarrow\C^2\times \R^{2m+1} \subset \C^{2m+3}$ 
such that $\Phi(M)\cap (\C^2 \times \{0\}^{2m+1}) = \Gamma\times \{0\}^{2m+1}$ and $\Phi(M)\cap (\C^2 \times \{r\})$ 
contains at most one  point for each $r\in \R^{2m+1}$ with $r\neq 0$.  Let $f$ denote the function on the 
standard 2-torus given in Theorem~\ref{graphtorus} whose graph has hull without analytic structure.  
Pull $f$ back to a function on $\Phi^{-1}(\Gamma\times \{0\}^{2m+1})$ by precomposing with $\Phi$, and extend 
the resulting function to a smooth function on $M$ which we will denote by $h$.  
Then $(\Phi,h):M\rightarrow \C^2\times \R^{2m+2}\subset \C^{2m+4}$ is a smooth embedding.  
Let $K=(\Phi,h)(M)$ and $G=(\Phi,h)(\Phi^{-1}(\Gamma\times \{0\}^{2m+2}))$.  Let $K_r=K\cap (\C^2\times \{r\})$.   
Proposition~\ref{proppen} gives that 
$\hat{K}=\bigcup_{r\in\R^{2m+2}}\widehat{K_r}$.  For each $r=(r_1,\ldots, r_{2m+2})\in \C^{2m+2}$ with 
$(r_1,\ldots, r_{2m+1})\neq 0$, we know $K_r$ contains at most one point and hence makes no contribution to 
$\hat{K}\setminus K$.  For $r\in \C^{2m+2}$ with $(r_1,\ldots, r_{2m+1}) =0$ and $r_{2m+2}$ arbitrary, 
$K_r$ is contained in $G$.  It is now easily seen that $\hat{K}\setminus K=\widehat G\setminus G$.  
Since $G$ is the image of the graph of $f$ under the embedding of $\C^3$ into $\C^{2m+4}$ given by 
$(z_1,z_2,z_3)\mapsto (z_1,z_2,0,\ldots, 0, z_3)$, we know that $\widehat G\setminus G$ is non-trivial but
contains no analytic subset of positive dimension by Theorem~\ref{graphtorus}.
\end{proof}

\section{Foliation structure for uniform algebras on manifolds}
Let $M$ be a $\mathcal{C}^1$-smooth manifold of real dimension $n\geq 2$.
Recall that a CR-structure on $M$ is 
a subbundle $\bL$ of the complexified tangent bundle $T^{\C}M$ that is involutive and such that $\bL_p\cap \overline\bL_p=\{0\}$ for each $p\in M$.  
Let $N=\{L+\overline L: L\in \bL\}$.  Then $N$ is a subbundle of the real tangent space $TM$, and there is a complex structure map $J$ on $N$ (i.e., a bundle isomorphism $J:N\rightarrow N$ such that $J^2=-\textrm{Id}$) so that $\bL$ and $\overline \bL$ are respectively the $+i$ and $-i$ eigenspaces of the extension of $J$ to $\bL \oplus \overline \bL$.
We say that a $\mathcal{C}^1$-smooth function $f \colon M\to \C$ is CR, $f\in {\rm CR}(M)$, if $df(Jv)=idf(v)$.

If $f\colon M\to \C^k$ is an embedding and the dimension of
$H_pf(M):=T_p f(M) \cap J_{st} T_p f(M)$,
where $J_{st}$ is the standard complex structure on $\C^k$, is independent of $p\in f(M)$, then there is a CR-structure $\bL$ on $f(M)$ whose associated real subbundle is $H f(M)$.
Moreover, since $Df \colon TM \to T f(M)$ is an isomorphism,
we get an induced CR-structure on $M$ with associated real subbundle 
$N:=(Df)^{-1} H f(M) \subset TM$.

We say that a CR-structure with associated real subbundle $N\subset TM$ is integrable on $M$ if for each point $p\in M$ there is a neighborhood $U\ni p$
and a $\mathcal{C}^1$-smooth mapping $\rho=(\rho_1,\ldots,\rho_{n-2m})\colon U\to \R^{n-2m}$ such that 
$d\rho_1\wedge \cdots \wedge d\rho_{n-2m}$ is non-vanishing in $U$ and such that for every $y\in U$,
the tangent space of $Z_y:=\{x\in U:\, \rho(x)=\rho(y)\}$ equals $N_y$. The local submanifolds $Z_y$
then define a foliation $\mathcal{F}$ on $M$, and $M$ has the structure of a Leviflat
CR-manifold with the Levi foliation $\mathcal{F}$ inducing the given CR-structure.

\begin{proof}[Proof of Theorem~\ref{stoutgen}]
Let $\cA_0$ denote the collection of functions in $\cA$ that are $\cC^1$ on $U$, and let $\tilde\Omega\subset U$ be the set of  
points $x\in U$ such that there exists an open neighborhood $U_x$ of $x$, 
and $f_j\in\mathcal A_0, \jn$, with $f|_{U_x}:U_x\rightarrow\mathbb C^n$ being an 
embedding. We begin by showing that $\tilde{\Omega}$ is dense in $U$; this will depend only on the fact that $\mathcal A_0$ is 
point separating. 

For every $f\in \mathcal{A}_0$ we write $f=u_f+iv_f$, where $u_f$ and $v_f$ are real.
By induction we will pick $f_1,\ldots, f_s,g_1,\ldots,g_t \in \mathcal{A}_0$, $s+t=n$, such that
\begin{equation}\label{wedge}
du_{f_1}\wedge \cdots \wedge du_{f_s}\wedge dv_{g_1}\wedge \cdots\wedge dv_{g_t}
\end{equation} 
is not identically zero: Since $\mathcal{A}_0$ separates points, $\mathcal{A}_0$ must contain a function
whose differential is not identically zero. Assume that we have found $f_1,\ldots, f_s,g_1,\ldots,g_t \in \mathcal{A}_0$, $s+t\leq n$,
such that \eqref{wedge} is non-vanishing on some open set $\Omega_{s+t}$. If $s+t<n$ then some level set of the map $(u_f,v_g)$ defines
a $\cC^1$-smooth submanifold $Y$ of $\Omega_{s+t}$ of positive dimension. Since $\mathcal{A}_0$ is point separating, all functions 
in $\mathcal{A}_0$ cannot be constant on $Y$ and so there is a function in $\mathcal{A}_0$ such that the wedge product of
its differential with \eqref{wedge} is not identically zero.
The resulting map $(f,g)\colon M\to \C^{n}$ now gives an embedding of some neighborhood
of some point in $U$ and hence, $\tilde{\Omega}$ is nonempty.
If $\tilde{\Omega}$ were not dense in $U$, then we could repeat the argument and show that there is a map
in $\mathcal{A}_0^n$ giving an embedding of some neighborhood of some point in $U\setminus \tilde{\Omega}$.

For each $x\in\tilde\Omega$ we let $m_x$ be the smallest integer
such there exists a neighborhood $U_x$ of $x$ and an embedding $f:U_x\rightarrow\mathbb C^k$, $f\in \mathcal{A}_0^k$, for some $k$,  with the property 
that the complex dimension of $H_{f(x)}f(U_x)$ is $m_x$. We claim that there 
exists a dense open subset $\Omega$ of $\tilde\Omega$
such that the map $x\mapsto m_x$ is locally constant on $\Omega$
and strictly greater than zero. To see this
let $m_1=\textrm{min}_{x\in\tilde\Omega} \{m_x\}$. 
Note that if $m_x=0$ then there exists an 
embedding $f:U_x\rightarrow\mathbb C^k$ such that $f(U_x)$ is totally 
real.  Hence $x$ is a local peak point for $\mathcal A$, and so $x$ is in the Shilov boundary. 
Thus, $m_1>0$. Let $\Omega_1:=\{x\in\tilde\Omega: \, m_x=m_1\}$; then $\Omega_1$
is open by upper  semi-continuity of the dimension of the maximal complex tangent space.
If $\Omega_1$ is dense we are done; otherwise let 
$m_2=\textrm{min}_{x\in\tilde\Omega\setminus \overline\Omega_1} \{m_x\}$.
Let $\Omega_2:=\{x\in\tilde\Omega\setminus\overline\Omega_1:m_x=m_2\}$;  
then $\Omega_2$ is open. It is now clear how to proceed to 
obtain $\Omega:=\Omega_1\cup\cdot\cdot\cdot\cup\Omega_s$.

Next we define a CR-structure on $\Omega$ via local embeddings into $\mathbb C^k$
as explained in the beginning of this section.
For $x\in\Omega$ pick an embedding $f:U_x\rightarrow\mathbb C^k$ such that 
$\dim_{\C} H_{f(y)}f(U_x)=m_y=m_x$ for all $y\in U_x$.  Then $f(U_x)$ defines 
a CR-structure ${\rm CR}_f$ on $U_x$.  Let $N_{f}\subset TU_x$ denote the associated real subbundle. 
We claim that if $g\in\mathcal A_0$ then $g\in {\rm CR}_f(U_x)$.
If not, consider the embedding $h=(f,g):U_x\rightarrow \mathbb C^{k+1}$; it then has the property that 
there is a $y\in U_x$ such that $\dim_{\C} H_{h(y)}h(U_x)<m_x=m_y$, which is a contradiction.
It now follows that ${\rm CR}_f$ is in fact independent of the choice of such an embedding $f$. Indeed, if $g$
is another choice of such an embedding at $x$, then, since $g\in {\rm CR}_f(U_x)$, we have that $N_f\subset N_g$, and then, since
$\dim\, N_f=\dim N_g$, it follows that $N_g=N_f$.   
Hence, we have a well defined CR-structure on each component of $\Omega$, and the functions in $\cA_0$ are in ${\rm CR}(\Omega)$.

Finally we show that the obtained CR-structure on $\Omega$ is integrable. 
(Once this is done, it is immediate that every function in $\cA$ is holomorphic along the leaves since the functions in $\cA_0$ are CR and $\cA_0$ is dense in $\cA$.)
By a result of R.~A.~Airapetian \cite{Airapetyan} 
it suffices to show that each
image $f(U_x)\subset \C^k$ of an embedding as above is locally 
polynomially convex.  
We now consider each $g\in\mathcal A$ as a function
on $f(U_x)$. If $U_x$ is sufficiently small it follows from the approximation 
theorem of Salah Baouendi and Fran\c cois Treves\footnote{Although this approximation theorem often is formulated in the $\mathcal{C}^2$-category, 
it holds also in $\mathcal{C}^1$.} that each $g\in\mathcal A$ is uniformly approximable by polynomials on $f(U_x)$.
Thus for each closed neighborhood $V_x$ of $x$ contained in $U_x$, and continuing to regard the functions in 
$\mathcal A$ as functions on $f(U_x)$, we have that $\mathcal A|_{f(V_x)}=[z_1,\ldots, z_k]$.  Consequently $f(V_x)$ 
will be polynomially convex provided the maximal ideal space of $\mathcal A|_{V_x}$ is $V_x$.  
The proof is thus concluded by invoking the following lemma.
\end{proof}

\begin{lemma}\label{aconvex}
Let $A$ be a uniform algebra on a compact Hausdorff space $X$, and suppose that $X$ is the maximal ideal space of $A$.  
Given a point $p\in X$ and a neighborhood $U$ of $p$, there is a closed neighborhood $V$ of $p$ such that the 
maximal ideal space of the restriction algebra $\overline {A|_V}$ is $V$.
\end{lemma}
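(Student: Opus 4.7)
My plan is to reduce the lemma to constructing a closed neighborhood $V\subset U$ of $p$ that is \emph{$A$-convex}, meaning
$$
\widehat V_A := \bigl\{x\in X : |g(x)|\leq \sup_V |g| \text{ for all } g\in A\bigr\}
$$
equals $V$. The reduction rests on the standard identification of the maximal ideal space of $\overline{A|_V}$ with $\widehat V_A$: any character $\phi$ of $\overline{A|_V}$, composed with the restriction map $A\to \overline{A|_V}$, yields a character of $A$, which by hypothesis is evaluation at some $x\in X$ satisfying $\phi(g|_V)=g(x)$ for every $g\in A$; the estimate $|\phi(g|_V)|\leq \sup_V |g|$ then forces $x\in \widehat V_A$, and conversely each point of $\widehat V_A$ plainly defines such a character.

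To produce $V$, I exploit that $X$ being the maximal ideal space of $A$ means its topology coincides with the Gelfand topology, so sets of the form $\bigcap_{i=1}^n\{x\in X:|g_i(x)-g_i(p)|<\epsilon\}$ with $g_i\in A$ and $\epsilon>0$ form a neighborhood basis at $p$. Choose such a basic set contained in $U$, replace each $g_i$ by $f_i:=g_i-g_i(p)\in A$, and define
$$
V := \bigl\{x\in X : |f_i(x)|\leq \epsilon/2 \text{ for } i=1,\ldots,n\bigr\}.
$$
Then $V$ is closed, is contained in $U$, and has $p$ in its interior, since the open set $\bigcap_i\{|f_i|<\epsilon/2\}$ sits inside $V$ and contains $p$.

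Finally, $V$ is $A$-convex: for $x\notin V$ there is some $i$ with $|f_i(x)|>\epsilon/2 \geq \sup_V |f_i|$, so the function $f_i\in A$ certifies that $x\notin\widehat V_A$; hence $\widehat V_A = V$, and by the reduction the maximal ideal space of $\overline{A|_V}$ is $V$. The argument carries no serious technical obstacle; the only conceptual point is the reduction to $A$-convexity, which is precisely where the hypothesis that $X$ is the maximal ideal space of $A$ enters essentially, and everything after that is routine bookkeeping with the Gelfand topology.
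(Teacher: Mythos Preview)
Your proposal is correct and takes essentially the same approach as the paper: both construct $V$ as a finite intersection $\bigcap_i\{|f_i|\le c\}$ with $f_i\in A$ (the paper phrases this via a ``simple compactness argument'' on $X\setminus U$, you via the Gelfand-topology basis), observe that such a set is $A$-convex, and conclude that the maximal ideal space of $\overline{A|_V}$ is $V$. The only difference is that the paper cites Gamelin~[II.6.1] for the last step, while you spell out the standard identification of characters of $\overline{A|_V}$ with points of $\widehat V_A$ directly.
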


\begin{proof}
A simple compactness argument shows that there exists a finite collection $\{ f_1,\ldots, f_n \}$ of 
functions in $A$ such that the set $V:=\{\,x\in X: |f_j|\leq 1\quad \forall j=1,\ldots, n \}$ is a (closed) 
neighborhood of $p$ contained in $U$.  Clearly $V$ is $A$-convex in the terminology of \cite[II.6]{Gamelin}.  Thus by 
\cite[II.6.1]{Gamelin}, the maximal ideal space of $\overline{A|_V}$ is $V$.
\end{proof}

For the proof of Theorem~\ref{astructureinhull}, note that taking $\cA=[z_1,\ldots, z_n]_X$ and applying Theorem~\ref{stoutgen} gives the desired dense open set $\Omega$ with a CR-structure on each component.  Since the CR-structure is obtained from local embeddings and was shown to be independent of the choice of embedding, we can now use the canonical global embedding.  Hence the obtained CR-structure makes the components of $\Omega$ into CR-submanifolds of $\C^n$.

\bibliographystyle{amsplain}

\end{document}